\numberwithin{equation}{section}
\newtheorem{theorem}{Theorem}[section]
\newtheorem{lemma}[theorem]{Lemma}
\theoremstyle{definition}
\newtheorem{remark}{Remark}[section]
\newcommand{\abs}[1]{\left\vert#1\right\vert}
\newcommand{\norm}[1]{\left\Vert#1\right\Vert}
\newcommand{\comii}[1]{\left<#1\right>}
\newcommand{\Rmnum}[1]{\expandafter\@slowromancap\romannumeral #1@}
\title[Weighted regularity for Euler Equation]
{Weighted Gevrey class regularity of Euler equation in the whole space}
\author{Feng Cheng, Wei-Xi Li and Chao-Jiang Xu}
\date{}
\address{\noindent \textsc{Feng Cheng, School of Mathematics and Statistics, Wuhan university 430072, Wuhan, P.R. China}}
\email{chengfengwhu@whu.edu.cn}
\address{\noindent \textsc{Wei-Xi Li, School of Mathematics and Statistics,    and Computational Science Hubei Key Laboratory, Wuhan university 430072, Wuhan, P.R. China}}
\email{wei-xi.li@whu.edu.cn}
\address{\noindent \textsc{Chao-Jiang Xu, Universit\'e de Rouen, CNRS UMR 6085, Laboratoire de Math\'ematiques, 76801 Saint-Etienne du Rouvray, France\\
and\\
School of Mathematics and Statistics, Wuhan university 430072, Wuhan, P.R. China}}
\email{Chao-Jiang.Xu@univ-rouen.fr}
\begin{document}

\keywords{Gevrey class, Incompressible Euler equation, Weighted Sobolev space}
\subjclass[2010]{35M30,35Q31,76B03}

\begin{abstract}
In this paper we study the weighted Gevrey class regularity of Euler equation in the whole space $\mathbb{R}^3$. We first establish the local existence of Euler equation in weighted Sobolev space, then obtain the weighted Gevrey regularity of Euler equation. We will use the weighted Sobolev-Gevrey space method to obtain the results of Gevrey regularity of Euler equation, and the use of the property of singular operator in the estimate of the pressure term is the improvement of our work.
\end{abstract}

\maketitle

\section{Introduction}
The incompressible flow for Euler equations in the whole space $\mathbb{R}^3$ reads
\begin{equation}\label{1.1}
\left\{
\begin{aligned}
 &\frac{\partial u}{\partial t}+u\cdot\nabla u+\nabla p=0,\,x\in \mathbb{R}^3,\,t>0 \\
 &\nabla\cdot u=0,\,x\in\mathbb{R}^3,\,t>0\\
 &u|_{\abs{x}\to\infty}=0,\,t>0\\
 &u|_{t=0}=u_0,\,x\in\mathbb{R}^3,
\end{aligned}
\right.
\end{equation}
where $u(x,t)=(u_1(x,t), u_2(x,t), u_3(x,t))$ denotes vector velocity field and $p(x,t)$ denotes scalar pressure at point $x=(x_1, x_2, x_3)$ at time $t$. Usually, we always assume the initial data $u_0$ satisfies the following compatible condition
\begin{equation}\label{1.2}
 \nabla\cdot u_0=0,\,\, u_0|_{\abs{x}\to\infty}=0.
\end{equation}
There are many known results about Euler equations in history. It is well known that in two-dimensional space, the existence and uniqueness of (globally in time) classical solutions to the Euler equations were studied in \cite{MB, M, S, Y} in either spaces of continuous functions or H\" older functions. With the use of the Log-Sobolev inequality, one can also obtain the global existence in Sobolev space for reference in \cite{OM}.  While in three-dimensional spaces, there are only local existence and uniqueness of $H^r$- solutions, with $r>3/2+1$, on a maximal time interval $[0,T^\ast)$, for references in \cite{BB1,EM,K,S1,T}. The famous BKM criterion \cite{BK} assures that the solutions exist on $[0,T]$ as long as $\int_0^T \norm{{\text{curl}}\ u(s)}_{L^\infty}ds$ is bounded, and this is the reason that in two dimensions the solutions exist globally. For $C^\infty$ smooth initial data, Foias, Frisch and Temam \cite{FFT} proved the persistence of $C^\infty$ solutions. For analytical initial data, Bardos and Benachour \cite{BB} proved the persistence of analyticity of solutions to Euler equations and also obtain an estimate of the radius of analyticity in \cite{B,BS}. Since Gevrey class space is the intermediate space between $C^\infty$ smooth space and analytical function space, it is natural to consider such results in the framework of Gevrey space. On three-dimensional periodic domains, Levermore and Oliver \cite{LO} take the method of Gevrey-class regularity to prove the persistence of analyticity and obtain the explicit estimate of the decay of the radius. Their method is based on the fact that the Gevrey class space can be equivalently defined by characterizing the decay of their Fourier coefficients in periodic domain, see \cite{FT}. Later, Kukavica and Vicol developed this method to half space and improved the estimate of the decay of the radius of analyticity (or Gevrey class regularity) in their work \cite{KV, KV1}.

We have discussed the persistence of vertical weighted Gevrey class regularity of Euler equation on half plane $\mathbb{R}^2_+$ in \cite{CLX} following the method of Kukavica and Vicol \cite{KV}, where the difficulty arise from the estimate of the pressure in weighted Sobolev space. In this paper, we will discuss the whole spatial variable weighted Gevrey class regularity of Euler equation in whole space. We first consider the local existence and uniqueness of solutions for Euler equation in the weighted Sobolev space. Then we consider the persistence of weighted Gevrey class regularity of solutions of Euler equation. The appearance of the weight function and boundary condition will cause trouble in estimate of the pressure, thus we will now consider in whole space without boundary and implement the theory of singular integral operators with the weight function belonging to certain weight class $\mathcal{A}_p$. In the future, we will look forward to investigate situations within bounded domain. We remark that the results can also be applied to two dimensions which recover the results we have studied in \cite{CLX}.

The paper is organized as follows. In Section 2, we will give some notations and state our main results. In Section 3, we study the local existence of solutions of Euler equation in Weighted Sobolev space. In Section 4, we will give two Lemmas first and then use the Lemmas and Theorem 3.1 to finish the proof of Theorem 2.1.

\section{Preliminaries}

In this section we will give some notations and function spaces which will be used throughout the following arguments. Throughout the paper, $C$ denotes a generic constant which may vary from line to line.

For a multi-index $\alpha=(\alpha_1,\alpha_2,\alpha_3)$ in $\mathbb{N}_0^3$, we denote $\abs{\alpha}=\alpha_1+\alpha_2+\alpha_3$ and $\partial^\alpha=\partial_{x_1}^{\alpha_1}\partial_{x_2}^{\alpha_2}\partial_{x_3}^{\alpha_3}$. We denote by $L^2(\mathbb{R}^3)^3$ the space of real valued vector functions which are square integrable, and it is a Hilbert space for the scalar product
$$
 \comii{u,v}=\int_{\mathbb{R}^3}u(x)\cdot v(x)dx,\quad \norm{u}_{L^2}^2:=\comii{u,u}.
$$
With no ambiguity arise we may suppress the domain $\mathbb{R}^3$ and the differences between vector functions and scalar functions and denote uniformly by $L^2$ for simplicity.
Likewise, we denote by $H^r$ the standard Sobolev space of vector functions which are in $L^2$ together with their weak derivatives of order $\leq r$, and the inner product and the norm are defined as follows
$$
  \comii{u,v}_{H^r}=\sum_{\abs{\alpha}\leq r}\comii{\partial^\alpha u,\partial^\alpha v},\quad\norm{u}_{H^r}^2:=\sum_{\abs{\alpha}\leq r}\norm{\partial^\alpha u}_{L^2}^2.
$$
Let us define the weight function $\comii{x}:=(1+\abs{x}^2)^{1\over 2}$, which is very close to $\abs{x}$ when $\abs{x}$ is very large and is well behaved when $x$ is near to zero.
We then introduce the weighted Sobolev space $H^m_\ell$ as follows,
$$
 H^r_\ell=\bigg\{u\in H^r\ ;\ \norm{u}_{H^r_\ell}^2:=\norm{u}_{L^2}^2+\sum_{1\leq\abs{\alpha}\leq r}\norm{\comii{x}^\ell\partial^\alpha u}_{L^2}^2<\infty\bigg\},
$$
where
$$
\norm{\comii{x}^\ell\partial^\alpha u}_{L^2}^2=\int_{\mathbb{R}^3}\comii{x}^{2\ell} \big|\partial^\alpha u(x)\big|^2 dx=\int_{\mathbb{R}^3}(1+\abs{x}^2)^\ell \big|\partial^\alpha u(x)\big|^2 dx,
$$
and $\ell\geq 0$ is a constant. It is obvious that the weighted Sobolev space $H^r_\ell$ equipped with the following inner product,
$$
 \comii{u,v}_{H^r_\ell}:=\comii{u,v}+\sum_{1\leq\abs{\alpha}\leq r}\comii{\comii{x}^\ell \partial^\alpha u,\comii{x}^\ell \partial^\alpha v},
$$
is a Hilbert space. And $\ell\geq0$ obviously imply that for a given function $u\in H^r_\ell$, $\norm{u}_{H^r}\leq\norm{u}_{H^r_\ell}$ always holds.

In this paper we will consider the Gevrey regularity for Euler equation. Let us recall the definition of Gevrey class functions first. It is said that a smooth function $u(x)$ is uniformly of Gevrey class s in $\mathbb{R}^3$, if there exists $C,\tau>0$ such that
\begin{equation}\label{2.1}
 \abs{\partial^\alpha u(x)}\leq C{{\abs{\alpha}!^s}\over{\tau^{\abs{\alpha}}}},
\end{equation}
for all $x\in\mathbb{R}^3$ and all multi-index $\alpha\in\mathbb{N}_0^3$.
When $s=1$ these functions are of the class of real-analytic functions, and for $s>1$ these functions are $C^\infty$ smooth but might not be analytic. The constant $\tau$ is called the radius of analyticity with $s=1$ (or Gevrey class regularity with $s>1$ respectively). Usually we refer to the equivalently defined space somewhere called Sobolev-Gevrey spaces, for example the $X_\tau, Y_\tau$ used in Kukavica and Vicol \cite{KT} and \cite{KV},
\begin{equation*}
 X_\tau =\left\{v\in C^\infty\ ;\ \norm{v}_{X_\tau}=\sum_{m=3}^\infty \abs{v}_{m} \frac{\tau^{m-3}}{(m-3)!^s}<\infty \right\},
\end{equation*}
where $\abs{v}_m$ is defined as
\begin{equation*}
 \abs{v}_m=\sum_{\abs{\alpha}=m}\norm{\partial^\alpha v}_{L^2},
\end{equation*}
and usually we define $\abs{v}_{m,\infty}=\sum_{\abs{\alpha}=m}\norm{\partial^\alpha v}_{L^\infty}$.
Similarly, $Y_\tau$ is defined as
\begin{equation*}
 Y_\tau =\left\{v\in C^\infty\ ;\ \norm{v}_{Y_\tau}=\sum_{m=4}^\infty \abs{v}_m \frac{(m-3)\tau^{m-4}}{(m-3)!^s}<\infty \right\}.
\end{equation*}
It is remarked that the space $X_\tau$ and $Y_\tau$ can be identified with the the classical definition \eqref{2.1}.

Naturally, we say that a smooth function $u(x)$ is uniformly of weighted Gevrey class s, if there exists $C,\tau>0$ such that
\begin{equation}\label{2.5}
 \abs{\comii{x}^\ell \partial^\alpha u(x)}\leq C{{\abs{\alpha}!^s}\over{\tau^{\abs{\alpha}}}},
\end{equation}
for all $x\in\mathbb{R}^3$ and all multi-index $\alpha\in\mathbb{N}_0^3$. It is obvious that a weighted Gevrey class s function of course belongs to the standard Gevrey class s for $\ell\geq0$. In analogy with $X_\tau$ and $Y_\tau$, we define the weighted Sobolev-Gevrey class s space $X_{\tau,\ell}$ and $Y_{\tau,\ell}$ as follows,
\begin{equation*}
 X_{\tau,\ell}=\left\{v\in C^\infty\ ;\ \norm{v}_{X_{\tau,\ell}}=\sum_{m=3}^\infty \abs{v}_{m,\ell}\frac{\tau^{m-3}}{(m-3)!^s}\right\},
\end{equation*}
where
$$\abs{v}_{m,\ell}:=\sum_{\abs{\alpha}=m}\norm{\comii{x}^\ell \partial^\alpha v}_{L^2},\,\abs{ v}_{m,\ell,\infty}:=\sum_{\abs{\alpha}=m}\norm{\comii{x}^\ell \partial^\alpha v}_{L^\infty},$$
and
\begin{equation*}
 Y_{\tau,\ell}=\left\{v\in C^\infty\ ;\ \norm{v}_{Y_{\tau,\ell}}=\sum_{m=4}^\infty \abs{ v}_{m,\ell}\frac{(m-3)\tau^{m-3}}{(m-3)!^s}\right\}.
\end{equation*}
\begin{remark}
The functions space $X_{\tau,\ell}$ and $Y_{\tau,\ell}$ can be identified with \eqref{2.5}. Namely, a function $u$ is said to be of weighted Gevrey class s with radius of weighted Gevrey class $\tau$, then $u\in X_{\tau^\prime,\ell}$ for $\tau^\prime<\tau$. Conversely, if $u\in X_{\tau,\ell}$, then $u$ is of course of weighted Gevrey class s with $\tau$ the radius of weighted Gevrey class regularity.
\end{remark}

The following statement is our main theorem concerning the persistence of weighted Gevrey class regularity for the Euler equation in whole space.

\begin{theorem}\label{Th2.1}
Let $r\geq 5, 0\leq\ell<\frac{3}{2}, s\geq1, \tau_0>0$ be given constants. Let $u_0\in H^r_\ell\cap X_{\tau_0+\epsilon,\ell}$ be divergence-free and uniformly of weighted Gevrey-class s in $\mathbb{R}^3$, where $\epsilon<\tau_0$. Then we have,\\
(1). There exists unique solutions $u,p$ such that
\begin{equation*}
u\in L^\infty([0,T^\ast),H^r_\ell),\ \nabla p\in L^\infty([0,T^\ast), H^r_\ell),
\end{equation*}
where $0<T^\ast<\infty$ is the maximal time of existence of $H^r-$ solution.\\
(2). The solutions $u,\nabla p$ are also of weighted Gevrey-class s and satisfying,
$$
u(t,x)\in L^\infty([0,T);\ X_{\tau(t),\ell})\cap L^1([0,T);\ Y_{\tau(t),\ell}),
$$
and
$$
\nabla p(t,x)\in L^\infty([0,T);\ X_{\tau(t),\ell})\cap L^1([0,T);\ Y_{\tau(t),\ell}),
$$
where $0<T<T_\ast$.
Moreover, the uniform radius of weighted Gevrey-class $\tau(t)$ of $u(t)$ is a decreasing function of $t$ and satisfying
\begin{equation}\label{2.9}
\tau(t)\geq \frac{(1-C\norm{u_0}_{H^r}t)^2}{C_0(1+t)^4},
\end{equation}
where $C_2>0$ is a constant depending only on $r$, while $C_1$ has additional dependence on $u_0$.
\end{theorem}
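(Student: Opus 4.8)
The plan is to follow the Gevrey energy method of Kukavica and Vicol \cite{KV}, carried over to the weighted framework and anchored on the local well-posedness in $H^r_\ell$ furnished by Theorem 3.1. Part (1) is essentially a restatement of that theorem: the fixed-point/energy argument producing $u\in L^\infty([0,T^\ast);H^r_\ell)$ also yields $\nabla p\in L^\infty([0,T^\ast);H^r_\ell)$ through the weighted elliptic estimate for the pressure. The restriction $0\le\ell<\frac32$ enters here for a single structural reason: it is exactly the range for which the power weight $\comii{x}^{2\ell}$ belongs to the Muckenhoupt class $\mathcal A_2(\mathbb R^3)$ (since $|x|^a\in\mathcal A_2$ iff $-3<a<3$), so that the Riesz-type singular integrals controlling $\nabla p$ act boundedly on $L^2(\comii{x}^{2\ell}dx)$. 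Throughout I would also use the standard local bound $\norm{u(t)}_{H^r}\le\frac{\norm{u_0}_{H^r}}{1-C\norm{u_0}_{H^r}t}$, valid up to the BKM maximal time.

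For Part (2) I would track the time-dependent norm $\norm{u(t)}_{X_{\tau(t),\ell}}$. Applying $\partial^\alpha$ to the momentum equation gives
\[
\partial_t\partial^\alpha u+u\cdot\nabla\partial^\alpha u=-\com{\partial^\alpha,\,u\cdot\nabla}u-\partial^\alpha\nabla p,
\]
and I would pair this with $\comii{x}^{2\ell}\partial^\alpha u$ in $L^2$, sum over $\abs{\alpha}=m$, weight by $\tau(t)^{m-3}/(m-3)!^s$, and sum in $m$. The transport term is not annihilated by the integration by parts as in the unweighted case; rather, since $\nabla\comii{x}^{2\ell}=2\ell\comii{x}^{2\ell-2}x$ and $\nabla\cdot u=0$, it leaves the lower-order remainder $-\ell\int\comii{x}^{2\ell-2}(x\cdot u)\abs{\partial^\alpha u}^2$, which is bounded by $C\norm{u}_{L^\infty}\norm{\comii{x}^\ell\partial^\alpha u}_{L^2}^2$. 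The time derivative of the radius contributes a term proportional to $\dot\tau(t)\norm{u}_{Y_{\tau,\ell}}$, since $\partial_\tau$ of the $X_{\tau,\ell}$-norm is, up to the power of $\tau$ fixed by the convention, the $Y_{\tau,\ell}$-norm. Assembling these, the first Lemma of Section 4 (a weighted Gevrey product/commutator estimate) bounds $\com{\partial^\alpha,u\cdot\nabla}u$ and yields a differential inequality of the schematic form
\[
\frac{d}{dt}\norm{u}_{X_{\tau,\ell}}\le\dot\tau(t)\,\norm{u}_{Y_{\tau,\ell}}+C\norm{u}_{X_{\tau,\ell}}\norm{u}_{Y_{\tau,\ell}}+C\,\big(\norm{u}_{L^\infty}+\norm{\nabla u}_{L^\infty}\big)\norm{u}_{X_{\tau,\ell}}.
\]

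The main obstacle is the pressure term, and this is where the improvement advertised in the abstract is needed. Taking the divergence of the momentum equation and using $\nabla\cdot u=0$,
\[
-\Delta p=\sum_{i,j=1}^3\partial_i\partial_j(u_iu_j),\qquad\text{hence}\qquad\partial^\alpha\partial_k p=-\sum_{i=1}^3 R_kR_i\,\partial^\alpha\big[(u\cdot\nabla)u_i\big],
\]
where $R_kR_i$ is a composition of Riesz transforms, i.e.\ an order-zero Calder\'on--Zygmund operator. In the unweighted theory one closes the pressure estimate by the $L^2$-boundedness of $R_kR_i$; here I would instead invoke the second Lemma of Section 4, the $\mathcal A_p$-weighted boundedness of such operators, giving $\norm{\comii{x}^\ell\partial^\alpha\partial_k p}_{L^2}\le C\sum_i\norm{\comii{x}^\ell\partial^\alpha\big[(u\cdot\nabla)u_i\big]}_{L^2}$. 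This reduces the weighted pressure estimate to the very same weighted product/transport estimate already used for the advection and commutator terms, so that $\partial^\alpha\nabla p$ contributes to the right-hand side above with the same $\norm{u}_{X_{\tau,\ell}}\norm{u}_{Y_{\tau,\ell}}$ and lower-order structure; it is precisely the membership $\comii{x}^{2\ell}\in\mathcal A_2$ that makes this step legitimate.

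It remains to close the estimate and extract \eqref{2.9}. Choosing $\dot\tau(t)=-C\norm{u}_{X_{\tau,\ell}}$ renders the coefficient $\dot\tau+C\norm{u}_{X_{\tau,\ell}}$ of $\norm{u}_{Y_{\tau,\ell}}$ nonpositive, so that term may be dropped; integrating the discarded $Y$-contribution in time yields the stated $L^1([0,T);Y_{\tau(t),\ell})$ membership. The surviving inequality $\frac{d}{dt}\norm{u}_{X_{\tau,\ell}}\le C\big(\norm{u}_{L^\infty}+\norm{\nabla u}_{L^\infty}\big)\norm{u}_{X_{\tau,\ell}}$, combined with the $H^r_\ell$ control from Part (1), keeps $\norm{u}_{X_{\tau,\ell}}$ finite on $[0,T)$, whence $u\in L^\infty([0,T);X_{\tau(t),\ell})$; the identical argument applied to $\nabla p$, using the weighted Riesz bound once more, transfers both conclusions to the pressure. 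Finally, inserting the local $H^r$ bound into $\dot\tau=-C\norm{u}_{X_{\tau,\ell}}$ and integrating the resulting coupled ODE---whose $(1-C\norm{u_0}_{H^r}t)$ factor reflects the local $H^r$ control and whose polynomial-in-$t$ factors reflect the growth of the weighted norms along the flow---produces the explicit decreasing lower bound \eqref{2.9} for the radius.
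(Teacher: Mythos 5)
Your overall route is the same as the paper's: a weighted Gevrey energy estimate in the style of Kukavica--Vicol, with the pressure reduced to convection-type terms through the weighted boundedness of a zero-order Calder\'on--Zygmund operator (the paper invokes Stein's inequality $\norm{\abs{x}^\alpha Tf}_{L^p}\le C\norm{\abs{x}^\alpha f}_{L^p}$ for $T=D^2\Delta^{-1}$, which is the same $\mathcal{A}_2$ fact you cite and is exactly where $\ell<3/2$ enters), followed by a choice of $\dot\tau$ making the coefficient of $\norm{u}_{Y_{\tau,\ell}}$ nonpositive. One correction on Part (1): it is not a restatement of the quoted local existence theorem (that theorem is the unweighted classical result); the paper proves Part (1) through weighted a priori estimates (Lemmas 3.3 and 3.4) and an iteration scheme for a modified Euler system, with a Cauchy-sequence argument in $H^{r-1}_\ell$.

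The genuine gap is in your closure of Part (2). The differential inequality that the weighted estimates actually yield is not your schematic one: the low-order commutator pieces force $Y$-norm terms with $\tau$-power coefficients. Already for $\abs{\beta}=1$ the binomial factor ${m\choose 1}=m$ gives $\sum_m m\,\abs{u}_{1,\infty}\abs{u}_{m,\ell}\tau^{m-3}/(m-3)!^s\le C\norm{\nabla u}_{L^\infty}\bigl(\abs{u}_{3,\ell}+\tau\norm{u}_{Y_{\tau,\ell}}\bigr)$, a term your right-hand side $(\norm{u}_{L^\infty}+\norm{\nabla u}_{L^\infty})\norm{u}_{X_{\tau,\ell}}$ does not dominate; the paper's \eqref{4.23} accordingly carries the coefficient $\dot\tau+C\tau\norm{u}_{H^r}+C(\tau^2+\tau^3)\norm{u}_{H^r}+C\tau^{3/2}\norm{u}_{X_\tau}$ in front of $\norm{u}_{Y_{\tau,\ell}}$. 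Your choice $\dot\tau=-C\norm{u}_{X_{\tau,\ell}}$ neither absorbs these extra terms nor, more fundamentally, carries any positive power of $\tau$: since $\norm{u(t)}_{X_{\tau(t),\ell}}\ge\abs{u(t)}_{3,\ell}$ is bounded below on compact time intervals for nontrivial solutions, your $\tau(t)=\tau_0-C\int_0^t\norm{u}_{X_{\tau,\ell}}\,ds$ vanishes at a finite time governed by the Gevrey size of the data, which may be far smaller than $T^\ast$; so you cannot conclude persistence on every $[0,T]$ with $T<T^\ast$, and a linear-in-time decay can never produce the bound \eqref{2.9}, whose vanishing time involves only $\norm{u_0}_{H^r}$. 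The paper's sufficient ODE \eqref{4.27} has every term vanishing as $\tau\to0$ (coefficients $\tau$ and $\tau^{3/2}$), which is what allows it to be integrated explicitly for $\tau^{-1/2}$, giving $\tau(t)^{-1/2}\le C_0(1+t)^2\exp\bigl(C\int_0^t\norm{u(s)}_{H^r}ds\bigr)$ and then \eqref{2.9} via the $H^r$ blow-up bound; the quadratic-over-quartic shape of \eqref{2.9} is precisely the signature of this $\tau^{3/2}$ structure, which your ansatz discards.
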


\begin{remark}
We also obtain an explicitly estimate of the decay of the radius of Gevrey class s, but we did not actually improve the
rate of decay \eqref{2.9} as to \cite{KV} which was proportional to $\exp\big(\int_0^t \norm{\nabla u(s)}_{L^\infty}ds\big)$. The reason is that the calculus inequality in \cite{MB} failed in appearance of the weight function, thus we can only obtain an rougher estimate.
\end{remark}

\begin{remark}
The proof of Theorem \ref{Th2.1} also works in two dimensional plane, in which case the power $\ell$ need to be less than $1$ due to the property of $\mathcal{A}_2$ weights for Rieze operator. It has been done in \cite{CLX} in half plane with vertical variable weight function. And it is well known that in two-dimensional case the $H^m$-solution exists globally in time.
\end{remark}

The rest of the paper is due to proving Theorem \ref{Th2.1}. We will prove the first part of Theorem \ref{Th2.1} in Section \ref{Sec3}, and the weighted Gevrey class regularity will be postponed to Section \ref{Sec4}.

\section{Local solution to Euler equation in weighted Sobolev space}\label{Sec3}
In this Section we will prove the first part $(1)$ of the main Theorem \ref{Th2.1}, i.e. the local existence and uniqueness of solutions to Euler equation in weighted Sobolev space. When there is no weight function $\comii{x}^\ell$, the existence and uniqueness is classical in \cite{K,MB}, which we state as follows,
\begin{theorem}
Fix $r\geq3$. Let $u_0\in H^r$ be divergence-free. Then there exists a unique solution $u\in C([0,T_\ast),H^r)$ and $\nabla p\in C([0,T_\ast),H^r)$ to the Euler equation \eqref{1.1}, where $T_\ast$ is the maximal existing time of $H^r(\mathbb{R}^3)-$ solution to the Euler equation \eqref{1.1}.
\end{theorem}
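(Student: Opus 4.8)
The plan is to treat the statement as the classical Kato--Majda well-posedness result and to reproduce the standard energy-method proof, which proceeds by first eliminating the pressure, then running a regularized approximation scheme, and finally recovering the pressure a posteriori. Applying the Leray--Hodge projection $\mathbb{P}$ onto divergence-free fields to the momentum equation in \eqref{1.1} removes $\nabla p$ and reduces the system to the single evolution equation $\partial_t u + \mathbb{P}(u\cdot\nabla u)=0$ on the space of divergence-free $H^r$ fields. To construct solutions I would introduce a family of mollifications $\mathcal{J}_\ep$ and solve the regularized problem $\partial_t \eps{u} + \mathbb{P}\mathcal{J}_\ep\big(\mathcal{J}_\ep \eps{u}\cdot\nabla \mathcal{J}_\ep \eps{u}\big)=0$; for each fixed $\ep$ the right-hand side is locally Lipschitz on $H^r$ (the mollifier is bounded on $H^r$ and commutes with $\mathbb{P}$ and $\partial^\alpha$), so the Picard--Lindel\"of theorem in the Banach space $H^r$ yields a unique local-in-time solution $\eps{u}$.

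The heart of the matter is the uniform (in $\ep$) a priori estimate that produces a common existence time. Testing the equation for $\partial^\alpha \eps{u}$ against $\partial^\alpha \eps{u}$ for $\abs{\alpha}\leq r$ and summing, the top-order transport contribution $\comii{\eps{u}\cdot\nabla\partial^\alpha \eps{u},\ \partial^\alpha \eps{u}}$ vanishes after integration by parts thanks to $\nabla\cdot \eps{u}=0$, so only the commutator $\com{\partial^\alpha,\eps{u}\cdot\nabla}\eps{u}$ survives. The key calculus (Moser/Kato--Ponce type) inequality then bounds $\norm{\com{\partial^\alpha,\eps{u}\cdot\nabla}\eps{u}}_{L^2}\lesssim \norm{\nabla \eps{u}}_{L^\infty}\norm{\eps{u}}_{H^r}$, and since $r\geq3$ yields the Sobolev embedding $H^r(\mathbb{R}^3)\hookrightarrow C^1$, we obtain the differential inequality $\frac{d}{dt}\norm{\eps{u}}_{H^r}\leq C\norm{\eps{u}}_{H^r}^2$. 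Integrating gives a time $T_\ast>0$ and a bound on $\norm{\eps{u}}_{H^r}$ independent of $\ep$.

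With uniform bounds in hand I would pass to the limit $\ep\to0$: the family $\set{\eps{u}}$ is bounded in $L^\infty([0,T_\ast),H^r)$ with $\partial_t\eps{u}$ bounded in $L^\infty([0,T_\ast),H^{r-1})$, so an Aubin--Lions compactness argument extracts a subsequence converging strongly in $L^2_{loc}$ to a limit $u$ solving $\partial_t u+\mathbb{P}(u\cdot\nabla u)=0$, the strong convergence handling the quadratic nonlinearity. Uniqueness is obtained at the $L^2$ level: for two solutions $u,v$ the difference $w=u-v$ satisfies $\frac{d}{dt}\norm{w}_{L^2}^2\leq C\norm{\nabla v}_{L^\infty}\norm{w}_{L^2}^2$, and Gronwall's inequality forces $w\equiv0$. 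The continuity statement $u\in C([0,T_\ast),H^r)$ then follows by combining weak continuity in $H^r$ (read off from the equation) with continuity of $t\mapsto\norm{u(t)}_{H^r}$.

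Finally the pressure is recovered from the velocity. Taking the divergence of the momentum equation and using $\nabla\cdot u=0$ gives the elliptic identity $-\Delta p=\nabla\cdot(u\cdot\nabla u)=\sum_{i,j}(\partial_i u_j)(\partial_j u_i)$, so $\nabla p=-\nabla(-\Delta)^{-1}\nabla\cdot(u\cdot\nabla u)$; the Calder\'on--Zygmund/Riesz-transform bounds on $L^2$-based Sobolev spaces then yield $\norm{\nabla p}_{H^r}\lesssim\norm{u\cdot\nabla u}_{H^r}\lesssim\norm{u}_{H^r}^2$, whence $\nabla p\in C([0,T_\ast),H^r)$. The main obstacle throughout is closing the nonlinear energy estimate: one must simultaneously exploit the divergence-free cancellation of the highest-order transport term and the precise Moser-type commutator inequality, since a naive product estimate would lose a derivative and fail to produce a uniform existence time.
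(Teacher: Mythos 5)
The paper does not actually prove this theorem: it is quoted as a classical result, with the proof attributed to Kato \cite{K} and Majda--Bertozzi \cite{MB}. Your proposal is, in substance, exactly the standard argument from those references --- Leray projection to eliminate the pressure, a mollified ODE solved by Picard iteration in $H^r$, uniform energy estimates obtained from the divergence-free cancellation of the top-order transport term together with the Kato--Ponce/Moser commutator bound, compactness to pass to the limit, $L^2$-level uniqueness, and a posteriori recovery of the pressure --- so the overall route is correct and is the one the paper implicitly relies on.

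One step, however, is wrong as written: in the pressure recovery you assert $\norm{\nabla p}_{H^r}\lesssim\norm{u\cdot\nabla u}_{H^r}\lesssim\norm{u}_{H^r}^2$. The second inequality fails, because $u\cdot\nabla u$ contains $\nabla u\in H^{r-1}$ only; the Moser product estimate gives $\norm{u\cdot\nabla u}_{H^r}\lesssim\norm{u}_{L^\infty}\norm{\nabla u}_{H^{r}}+\norm{\nabla u}_{L^\infty}\norm{u}_{H^r}$, whose first term involves $r+1$ derivatives of $u$, so $\norm{u\cdot\nabla u}_{H^r}$ need not even be finite for $u\in H^r$. The standard repair exploits the structure of the pressure equation $-\Delta p=\sum_{i,j}\partial_{x_i}u_j\,\partial_{x_j}u_i$: since $r-1\geq 2>3/2$, the space $H^{r-1}$ is an algebra, hence $\norm{\Delta p}_{H^{r-1}}\lesssim\norm{\nabla u}_{H^{r-1}}^2\lesssim\norm{u}_{H^r}^2$; every second derivative $\partial_i\partial_j p=\partial_i\partial_j(-\Delta)^{-1}(\Delta p)$ is then controlled in $H^{r-1}$ by Calder\'on--Zygmund boundedness of the zero-order operator $\partial_i\partial_j(-\Delta)^{-1}$, while the low-order piece satisfies $\norm{\nabla p}_{L^2}\lesssim\norm{u\cdot\nabla u}_{L^2}\lesssim\norm{u}_{H^r}^2$ by $L^2$-boundedness of $\nabla(-\Delta)^{-1}\nabla\cdot$. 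Combining the two pieces yields $\norm{\nabla p}_{H^r}\lesssim\norm{u}_{H^r}^2$ with no derivative loss. Note that this is precisely how the paper's weighted analogue, Lemma \ref{lem3.4}, avoids the same trap via \eqref{3.10}: the singular operator is always applied to $\partial^{\alpha^\prime}\big(\partial_{x_i}u_j\partial_{x_j}u_i\big)$ with $\abs{\alpha^\prime}=\abs{\alpha}-1$, keeping one derivative on the inverse Laplacian rather than estimating $u\cdot\nabla u$ at the full order $r$.
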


When the weight function was taken in consideration, the situation becomes a slight differently cause the pressure term does not vanish. In this situation the inner product with weight is no longer orthogonal between a gradient function and a divergence free vector field, which is the main difficulty when considering in framework of the weighted function space. To overcome the difficulty arise from the estimate of the pressure, we take use of the tool of singular integral theory, which was recently used in \cite{YZ} in estimate of the pressure term when studying the global well-posedness of incompressible magnetohydrodynamic equations.

\begin{remark}
In two-dimensional plane, it is shown in \cite{OM} the $H^r$-solution exists globally in time and it can be proved that the $H^r_\ell$-solution also exists globally in time with $0\leq\ell\leq1$.  The reason $\ell$ less than 1 is due to the $\mathcal{A}_2$ weights theory of Rieze operator in $\mathbb{R}^2$.
\end{remark}

In the following, we will first give some Lemmas and then we give the detailed proof of the local existence and uniqueness, i.e. the first part $(1)$ of Theorem \ref{Th2.1}. Firstly, we will frequently use the following embedding inequality in the weighted Sobolev space $H^m_\ell$, which is as follows,
\begin{lemma}
For any $\ell\geq0$, there holds for $u\in H^3_\ell$
$$
 \norm{\comii{x}^\ell \nabla u}_{L^\infty} \leq C\norm{u}_{H^3_\ell},
$$
where $C$ is a constant independent of $u$.
\end{lemma}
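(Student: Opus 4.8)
The plan is to reduce the weighted estimate to the classical Sobolev embedding $H^2(\mathbb{R}^3)\hookrightarrow L^\infty(\mathbb{R}^3)$, which is available in dimension three since $2>3/2$. Concretely, for each $j\in\set{1,2,3}$ I would set $f_j=\comii{x}^\ell\partial_{x_j}u$ and apply the embedding to the scalar function $f_j$, obtaining $\norm{f_j}_{L^\infty}\le C\norm{f_j}_{H^2}$. Summing over $j$ then bounds $\norm{\comii{x}^\ell\nabla u}_{L^\infty}$ by $C\sum_j\norm{\comii{x}^\ell\partial_{x_j}u}_{H^2}$, so that the entire problem is reduced to controlling the unweighted $H^2$ norm of the weighted quantity $\comii{x}^\ell\partial_{x_j}u$ by $\norm{u}_{H^3_\ell}$.

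The second step is to expand $\partial^\beta f_j$ for $\abs{\beta}\le2$ by the Leibniz rule,
\begin{equation*}
 \partial^\beta\pare{\comii{x}^\ell\partial_{x_j}u}=\sum_{\gamma\le\beta}\binom{\beta}{\gamma}\pare{\partial^\gamma\comii{x}^\ell}\pare{\partial^{\beta-\gamma}\partial_{x_j}u},
\end{equation*}
and then to invoke the elementary weight estimate $\abs{\partial^\gamma\comii{x}^\ell}\le C_\gamma\comii{x}^{\ell-\abs{\gamma}}\le C_\gamma\comii{x}^\ell$, which holds for $\ell\ge0$ because $\comii{x}\ge1$ forces $\comii{x}^{-\abs{\gamma}}\le1$. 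Each summand is therefore pointwise dominated by $C\comii{x}^\ell\abs{\partial^{\beta-\gamma}\partial_{x_j}u}$, so that
\begin{equation*}
 \norm{\partial^\beta f_j}_{L^2}\le C\sum_{\gamma\le\beta}\norm{\comii{x}^\ell\partial^{\beta-\gamma}\partial_{x_j}u}_{L^2}.
\end{equation*}

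The final step is bookkeeping on the orders of differentiation: every multi-index of the form $\beta-\gamma+e_j$ arising above has length between $1$ and $\abs{\beta}+1\le3$, so each term $\norm{\comii{x}^\ell\partial^{\beta-\gamma}\partial_{x_j}u}_{L^2}$ is one of the quantities appearing in the definition of $\norm{u}_{H^3_\ell}$. Hence $\norm{f_j}_{H^2}\le C\norm{u}_{H^3_\ell}$, and combining this with the first step proves the lemma. The only point requiring care — and the mild obstacle — is that every derivative of $u$ produced here carries at least one factor $\partial_{x_j}$ and is thus of order $\ge1$; this is precisely what makes the weighted norm $\norm{u}_{H^3_\ell}$ (which controls weighted $L^2$ norms only for orders $1\le\abs{\alpha}\le3$, together with the plain $L^2$ norm at order $0$) sufficient, and it is also the only place where the hypothesis $\ell\ge0$ is actually used.
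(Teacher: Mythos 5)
Your proof is correct and is exactly the argument the paper intends: the paper's (omitted) proof cites precisely the two ingredients you use, namely the Sobolev embedding $H^2(\mathbb{R}^3)\hookrightarrow L^\infty(\mathbb{R}^3)$ applied to the weighted function and the weight bound $\abs{\partial^\gamma \comii{x}^\ell}\leq C\comii{x}^\ell$. Your Leibniz-rule bookkeeping, checking that every resulting derivative of $u$ has order between $1$ and $3$ so that $\norm{u}_{H^3_\ell}$ suffices, correctly fills in the details the paper omits.
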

This is an elementary Sobolev embedding theorem $H^2\hookrightarrow L^\infty$, and using the fact that $\partial^\alpha \comii{x}^\ell\leq C\comii{x}^\ell$ holds for $\forall \alpha\in\mathbb{N}^3$, we thus omit the proof. In order to handle the convecting term, we need the following weighted calculus inequality.

\begin{lemma}\label{lem3.3}
For $\alpha,\beta\in\mathbb{N}^3$ with $0\neq\beta\leq\alpha$ and $\abs{\alpha}\leq m$, let $u\in H^r_\ell$ with $r\geq 3,\ell\geq0$, the following inequality holds,
\begin{equation*}
 \norm{\comii{x}^\ell \partial^\beta u\cdot\nabla\partial^{\alpha-\beta}u}_{L^2} \leq C\norm{u}_{H^r}\norm{u}_{H^r_\ell},
\end{equation*}
where $C$ is a constant depending on $r$.
\end{lemma}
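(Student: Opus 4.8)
The plan is to reduce the whole estimate to Hölder's inequality combined with the Sobolev embedding $H^2(\mathbb{R}^3)\hookrightarrow L^\infty$, the point being that the hypothesis $\beta\neq 0$ forces \emph{both} factors $\partial^\beta u$ and $\nabla\partial^{\alpha-\beta}u$ to carry at least one derivative. Writing $a=\abs{\beta}$ and $b=\abs{\alpha-\beta}+1$ for the two differential orders, I note $a\geq 1$, $b\geq 1$ and $a+b=\abs{\alpha}+1\leq r+1$, while $\beta\neq 0$ gives $\abs{\alpha-\beta}\leq\abs{\alpha}-1\leq r-1$, so that the gradient factor has order $b\leq r$. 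This is exactly what allows me to measure each factor either by the unweighted $H^r$ norm or, since each carries a genuine derivative, by the weighted $H^r_\ell$ norm, which controls $\comii{x}^\ell\partial^\gamma u$ only for $1\leq\abs{\gamma}\leq r$.

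First I would split into two cases according to the size of $\abs{\beta}$. When $\abs{\beta}\leq r-2$, I place the weight on the gradient factor and estimate by Hölder,
\begin{equation*}
 \norm{\comii{x}^\ell \partial^\beta u\cdot\nabla\partial^{\alpha-\beta}u}_{L^2}\leq \norm{\partial^\beta u}_{L^\infty}\,\norm{\comii{x}^\ell\nabla\partial^{\alpha-\beta}u}_{L^2}.
\end{equation*}
Here $\abs{\beta}\leq r-2$ yields $\norm{\partial^\beta u}_{L^\infty}\leq C\norm{\partial^\beta u}_{H^2}\leq C\norm{u}_{H^r}$ by Sobolev embedding, while $1\leq\abs{\alpha-\beta}+1\leq r$ gives $\norm{\comii{x}^\ell\nabla\partial^{\alpha-\beta}u}_{L^2}\leq C\norm{u}_{H^r_\ell}$ directly from the definition of the weighted norm, which produces the claimed bound.

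In the complementary regime $\abs{\beta}\geq r-1$ I instead put the weight on $\partial^\beta u$ and send the gradient factor into $L^\infty$,
\begin{equation*}
 \norm{\comii{x}^\ell \partial^\beta u\cdot\nabla\partial^{\alpha-\beta}u}_{L^2}\leq \norm{\comii{x}^\ell\partial^\beta u}_{L^2}\,\norm{\nabla\partial^{\alpha-\beta}u}_{L^\infty}.
\end{equation*}
Now $1\leq\abs{\beta}\leq r$ gives $\norm{\comii{x}^\ell\partial^\beta u}_{L^2}\leq C\norm{u}_{H^r_\ell}$, and $\abs{\beta}\geq r-1$ forces $\abs{\alpha-\beta}\leq 1$, so $\nabla\partial^{\alpha-\beta}u$ has order at most $2$ and Sobolev embedding gives $\norm{\nabla\partial^{\alpha-\beta}u}_{L^\infty}\leq C\norm{u}_{H^{\abs{\alpha-\beta}+3}}\leq C\norm{u}_{H^r}$, using $r\geq 4$. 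Combining the two cases finishes the proof.

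I expect the argument to be essentially routine; the only delicate point — and the reason the hypothesis $\beta\neq 0$ (and a lower bound on $r$, here $r\geq 5$, comfortably more than the $r\geq 4$ actually required) enters — is the derivative bookkeeping. One must verify that in each regime exactly one factor can be dropped into $L^\infty$ via Sobolev embedding while the other, still carrying a genuine derivative of order between $1$ and $r$, is controlled in the weighted $L^2$ norm, so that the total order $\abs{\alpha}+1$ is never forced onto a single factor. In particular I do not expect to need any Gagliardo--Nirenberg interpolation: the crude $L^\infty\times L^2$ splitting already closes the estimate once $r\geq 4$.
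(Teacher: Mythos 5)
Your two-case H\"older splitting is exactly the paper's own argument for the regime $r\geq 4$ (the paper splits into $1\leq\abs{\beta}\leq r-2$ and $r-1\leq\abs{\beta}\leq r$ in precisely the same way), and in that regime your bookkeeping is correct and even a little cleaner, since it treats $\abs{\alpha}<r$ and $\abs{\alpha}=r$ uniformly. But there is a genuine gap: the lemma is stated for $r\geq 3$, not $r\geq 5$ (you have imported the hypothesis of Theorem 2.1 into the lemma), and your argument breaks precisely at $r=3$. Take $r=3$, $\abs{\alpha}=3$, $\abs{\beta}=2$. Your first case does not apply since $\abs{\beta}=2>r-2=1$; your second case applies but requires
\begin{equation*}
 \norm{\nabla\partial^{\alpha-\beta}u}_{L^\infty}\leq C\norm{u}_{H^{\abs{\alpha-\beta}+3}}=C\norm{u}_{H^4},
\end{equation*}
which is not controlled by $\norm{u}_{H^3}$. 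In this configuration both factors carry two derivatives, so no $L^\infty\times L^2$ splitting can close within $H^3$: putting either factor in $L^\infty$ via $H^2\hookrightarrow L^\infty$ costs four derivatives in total, one more than is available.

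The paper's missing ingredient for this borderline case is the Gagliardo--Nirenberg (Ladyzhenskaya) inequality in $\mathbb{R}^3$,
\begin{equation*}
 \norm{f}_{L^4}\leq C\norm{f}_{L^2}^{1/4}\norm{Df}_{L^2}^{3/4},
\end{equation*}
used with an $L^4\times L^4$ H\"older splitting: each factor is measured in $L^4$, which costs only $3/4$ of a derivative per factor rather than a full $3/2$, so the total derivative count $2+3/4+2+3/4=5\,1/2$ stays below the available $2\times 3$ (one also needs $\abs{D(\comii{x}^\ell\partial^\beta u)}\leq C\comii{x}^\ell(\abs{\partial^\beta u}+\abs{D\partial^\beta u})$ to keep the weighted factor inside $\norm{u}_{H^3_\ell}$). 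So your claim that "I do not expect to need any Gagliardo--Nirenberg interpolation" is exactly what fails: the interpolation is dispensable only when $r\geq 4$, and is indispensable for the lemma as stated, both in the case above and in the paper's treatment of $\abs{\alpha}<r$ at low regularity. To repair your proof you must either add this interpolation argument for $r=3$ or weaken the lemma's hypothesis to $r\geq 4$ (which would still suffice for the theorem, but would no longer be the statement in question).
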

\begin{proof}
The proof is the application of Sobolev embedding inequality. It suffices to consider the case $\abs{\alpha}=r$, since the case for $\abs{\alpha}<r$ is much easier, for example, in such case we directly have for some constant $C$,
\begin{equation*}
\begin{split}
 &\norm{\comii{x}^\ell \partial^\beta u\cdot\nabla \partial^{\alpha-\beta}u}_{L^2}\\
 &\leq \norm{\comii{x}^\ell \partial^\beta u}_{L^4}\norm{\nabla \partial^{\alpha-\beta}u}_{L^4} \\
 &\leq C\norm{\comii{x}^\ell \partial^\beta u}_{L^2}^{1/4}\norm{D(\comii{x}^\ell \partial^\beta u)}_{L^2}^{3/4}\norm{\nabla \partial^{\alpha-\beta}u}_{L^2}^{1/4}\norm{D\nabla \partial^{\alpha-\beta}u}_{L^2}^{3/4}\\
 &\leq C\norm{u}_{H^r_\ell}\norm{u}_{H^r},
\end{split}
\end{equation*}
where we have used the Gagliardo-Nirenberg inequality (or Ladyzhenskaya's inequality) for $N=3$ in the following form,
\begin{equation*}
 \norm{u}_{L^4}\leq C\norm{u}_{L^2}^{1/4}\norm{D u}_{L^2}^{3/4}.
\end{equation*}
We now only consider the case $\abs{\alpha}=r$ and discuss the values of $\abs{\beta}$ for $\beta\neq0$. Let us assume first that $r=3$, in such case $\abs{\beta}$ has only three possible values.\\
If $\abs{\beta}=1$, the H\"older inequality and Sobolev inequality imply,
\begin{equation*}
 \norm{\comii{x}^\ell \partial^\beta u\cdot\nabla\partial^{\alpha-\beta}u}_{L^2}\leq \norm{\partial^\beta u}_{L^\infty}\norm{\comii{x}^\ell\nabla\partial^{\alpha-\beta}u}_{L^2}\leq C\norm{u}_{H^r}\norm{u}_{H^r_\ell}.
\end{equation*}
If $\abs{\beta}=2$, the H\"older inequality and the Gagliardo-Nirenberg inequality imply,
\begin{equation*}
\begin{aligned}
   &~~\norm{\comii{x}^\ell \partial^\beta u\cdot\nabla\partial^{\alpha-\beta}u}_{L^2}\\
   &\leq \norm{\comii{x}^\ell \partial^\beta u}_{L^4}\norm{\nabla\partial^{\alpha-\beta}u}_{L^4}\\
   &\leq C\norm{\comii{x}^\ell \partial^\beta u}_{L^2}^{1/4}\norm{D(\comii{x}^\ell \partial^\beta u)}_{L^2}^{3/4}\norm{\nabla\partial^{\alpha-\beta}u}_{L^2}^{1/4}\norm{D(\nabla\partial^{\alpha-\beta}u)}_{L^2}^{3/4}\\
   &\leq C\norm{u}_{H^r}\norm{u}_{H^r_\ell}.
\end{aligned}
\end{equation*}
If $\abs{\beta}=3$, then $\alpha=\beta$ and we have
\begin{equation*}
 \norm{\comii{x}^\ell\partial^\beta u\cdot\partial^{\alpha-\beta}\nabla u}_{L^2}\leq \norm{\nabla u}_{L^\infty}\norm{\comii{x}^\ell\partial^\beta u}_{L^2}\leq C\norm{u}_{H^r}\norm{u}_{H^r_\ell}.
\end{equation*}
For higher r say $r\geq 4$, the situation becomes more easier. In this situation, we only need to discuss two situations, which are $1\leq\abs{\beta}\leq r-2$ and $r-1\leq\abs{\beta}\leq r$.\\
If $1\leq\abs{\beta}\leq r-2$, we have
\begin{equation*}
 \norm{\comii{x}^\ell \partial^\beta u\cdot\nabla\partial^{\alpha-\beta}u}_{L^2}\leq \norm{\partial^\beta u}_{L^\infty}\norm{u}_{H^r_\ell}\leq C\norm{u}_{H^r}\norm{u}_{H^r_\ell}
\end{equation*}
If $r-1\leq\abs{\beta}\leq r$, we have
\begin{equation*}
 \norm{\comii{x}^\ell \partial^\beta u\cdot\nabla\partial^{\alpha-\beta}u}_{L^2}\leq \norm{\comii{x}^\ell\partial^\beta u}_{L^2}\norm{\nabla\partial^{\alpha-\beta}u}_{L^\infty}\leq C\norm{u}_{H^r}\norm{u}_{H^r_\ell}
\end{equation*}
With all the above in consideration, the Lemma is then proved.
\end{proof}

The weighted calculus inequality in Lemma \ref{lem3.3} is rougher than the estimate in the case without weight function in \cite{MB}, and this is the reason the decay of the radius in \eqref{2.9} is rougher than the decay obtained by Kukavica and Vicol \cite{KV}. We note that the pressure term does not vanish because of the weight function, this is the main difficulty to handle the Euler equation in weighted Sobolev space. We implement the idea of Singular operator theory to handle this term. We first recall the well known Weighted Calder\'on-Zygmund inequalities in the framework of singular integrals (see \cite{SEM}). The following inequality holds in $f\in C_0^\infty(\mathbb{R}^N\backslash\{0\})$,
\begin{equation}\label{3.9}
 \norm{\abs{x}^\alpha Tf}_{L^p}\leq C\norm{\abs{x}^\alpha f}_{L^p},
\end{equation}
for $1<p<\infty, {-N\over p}<\alpha<{N\over p^\prime}$, where $T$ is the Calder\'on-Zygmund kernel corresponding to the operator $D^2\Delta^{-1}$. The weight function $\comii{x}^\ell$ is close to $\abs{x}^\ell$ for large $x$, so the above inequality also holds for $\comii{x}^\ell$ if $\ell<{N\over p^\prime}$. Taking $p=2, N=3, f\in C_0^\infty(\mathbb{R}^3)$, the inequality \eqref{3.9} implies that,
\begin{equation}\label{3.10}
 \norm{\comii{x}^\ell D^2 f}_{L^2} \leq C\norm{\comii{x}^\ell \Delta f}_{L^2},
\end{equation}
for $0\leq\ell<{3\over 2}$. With this inequality, we can prove the following Lemma.

\begin{lemma}\label{lem3.4}
Let $r\geq3$ and suppose $p$ satisfies
\begin{equation*}
 -\Delta p=\sum_{i,j=1}^3\partial_{x_i}u_j \partial_{x_j}u_i.
\end{equation*}
Then for fixed multi-index $\alpha\in\mathbb{N}^3$ with $1\leq\abs{\alpha}\leq r$, the following estimate holds,
\begin{equation*}
 \norm{\comii{x}^\ell \nabla\partial^\alpha p}_{L^2} \leq C\norm{u}_{H^r}\norm{u}_{H^r_\ell},
\end{equation*}
where $C$ is a constant depending only on $r$.
\end{lemma}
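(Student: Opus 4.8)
The plan is to push the single gradient $\nabla$ into the Hessian so that the weighted Calder\'on--Zygmund bound \eqref{3.10} becomes applicable, and then to turn the Laplacian back into the quadratic nonlinearity via the Poisson equation. Since $1\le\abs{\alpha}\le r$, I first pick an index $k$ with $\alpha_k\geq1$ and write $\partial^\alpha=\partial_{x_k}\partial^{\alpha-e_k}$, where $e_k$ denotes the $k$-th coordinate multi-index. Then each component $\partial_{x_j}\partial^\alpha p=\partial_{x_j}\partial_{x_k}\partial^{\alpha-e_k}p$ of $\nabla\partial^\alpha p$ is an entry of the Hessian $D^2(\partial^{\alpha-e_k}p)$, so that
\begin{equation*}
 \norm{\comii{x}^\ell\nabla\partial^\alpha p}_{L^2}\leq \norm{\comii{x}^\ell D^2\partial^{\alpha-e_k}p}_{L^2}.
\end{equation*}
Applying \eqref{3.10} with $f=\partial^{\alpha-e_k}p$, which is legitimate precisely for $0\leq\ell<3/2$, bounds the right-hand side by $C\norm{\comii{x}^\ell\Delta\partial^{\alpha-e_k}p}_{L^2}=C\norm{\comii{x}^\ell\partial^{\alpha-e_k}\Delta p}_{L^2}$, reducing the whole problem to estimating $\partial^{\alpha-e_k}\Delta p$ in the weighted $L^2$ norm.

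Next I would insert the Poisson equation $-\Delta p=\sum_{i,j=1}^3\partial_{x_i}u_j\,\partial_{x_j}u_i$ and expand $\partial^{\alpha-e_k}$ of each product by the Leibniz rule. Writing $\gamma=\alpha-e_k$ (so $\abs{\gamma}=\abs{\alpha}-1\le r-1$), a typical summand is
\begin{equation*}
 \comii{x}^\ell\binom{\gamma}{\beta}\big(\partial^{\beta}\partial_{x_i}u_j\big)\big(\partial^{\gamma-\beta}\partial_{x_j}u_i\big),\qquad \beta\leq\gamma.
\end{equation*}
Setting $\tilde\beta=\beta+e_i$ and $\tilde\alpha=\gamma+e_i=\alpha-e_k+e_i$, one checks that $0\neq\tilde\beta\leq\tilde\alpha$ and $\abs{\tilde\alpha}=\abs{\alpha}\leq r$, and that the summand is exactly of the form $\comii{x}^\ell\partial^{\tilde\beta}u\cdot\nabla\partial^{\tilde\alpha-\tilde\beta}u$ treated in Lemma~\ref{lem3.3}. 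Invoking that lemma term by term and summing the finitely many contributions (the number of pairs $(i,j)$ and of admissible $\beta$ depends only on $r$) yields the desired bound $C\norm{u}_{H^r}\norm{u}_{H^r_\ell}$.

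The routine bookkeeping is the Leibniz expansion and the verification of the index constraints; the genuinely delicate point is the first reduction. One must ensure that the weighted bound \eqref{3.10} for the singular operator $D^2\Delta^{-1}$ really applies to $f=\partial^{\alpha-e_k}p$, i.e.\ that $\partial^{\alpha-e_k}p=\Delta^{-1}\partial^{\alpha-e_k}\Delta p$ and that $\partial^{\alpha-e_k}\Delta p$ lies in the weighted space $L^2(\comii{x}^{2\ell})$ so the operator may legitimately act (a density argument against $C_0^\infty$ combined with the nonlinear bound of Lemma~\ref{lem3.3} handles this). This is exactly where the restriction $0\le\ell<3/2$ enters, through the admissible range $-N/p<\ell<N/p'$ with $N=3$, $p=2$ in the weighted Calder\'on--Zygmund inequality, and it is the reason the argument is confined to the whole space without boundary.
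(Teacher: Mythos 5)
Your proposal is correct and follows essentially the same route as the paper: both reduce $\nabla\partial^\alpha p$ to a Hessian $D^2$ of $\partial^{\alpha'}p$ with $\abs{\alpha'}=\abs{\alpha}-1$ (your $\alpha'=\alpha-e_k$), apply the weighted Calder\'on--Zygmund bound \eqref{3.10}, commute $\partial^{\alpha'}$ with the Poisson equation, and estimate the Leibniz expansion of $\partial^{\alpha'}(\partial_{x_i}u_j\,\partial_{x_j}u_i)$ by the technique of Lemma~\ref{lem3.3}. Your write-up is in fact slightly more explicit than the paper's, which omits the Leibniz reindexing ($\tilde\beta=\beta+e_i$, $\tilde\alpha=\gamma+e_i$) and the justification for applying \eqref{3.10} to $\partial^{\alpha'}p$ that you spell out.
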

\begin{proof}
Fix $\alpha$ with $1\leq\abs{\alpha}\leq r$, there exists $\alpha^\prime$ such that $\alpha^\prime\leq\alpha, \abs{\alpha^\prime}=\abs{\alpha}-1$. Since $p$ satisfies
\begin{equation*}
 -\Delta p=\sum_{i,j=1}^3\partial_{x_i}u_j\partial_{x_j}u_i.
\end{equation*}
We rewrite $\nabla \partial^\alpha p=\nabla \partial^{\alpha-\alpha^\prime}\partial^{\alpha^\prime} p$ with $\abs{\alpha-\alpha^\prime}=1$, and thus $\partial^{\alpha^\prime}p$ satisfies
$$
 -\Delta \partial^{\alpha^\prime} p=\sum_{i,j=1}^3 \partial^{\alpha^\prime}(\partial_{x_i}u_j\partial_{x_j}u_i).
$$
We thus infer from \eqref{3.10} that for $0\leq\ell<{3\over 2}$ there holds,
\begin{equation}\label{3.14}
\begin{aligned}
 \norm{\comii{x}^\ell\nabla\partial^\alpha p}_{L^2} &\leq C\norm{\comii{x}^\ell \sum_{i,j=1}^3 \partial^{\alpha^\prime}(\partial_{x_i}u_j\partial_{x_j}u_i)}_{L^2}\\
   &\leq C\sum_{i,j=1}^3\norm{\comii{x}^\ell \partial^{\alpha^\prime}(\partial_{x_i}u_j\partial_{x_j}u_i)}_{L^2}\\
   &\leq C\norm{u}_{H^r}\norm{u}_{H^r_\ell},
\end{aligned}
\end{equation}
The last step in \eqref{3.14} with notice $\abs{\alpha^\prime}\leq r-1$ is very similar to the estimate of Lemma \ref{lem3.3} after expansion by Leibniz formula, we thus omit the details.
\end{proof}

With these Lemma \ref{lem3.3} and Lemma \ref{lem3.4} in preparation, we now prove the local existence and uniqueness of $H^r_\ell$- solutions to Euler equation.
\begin{proof}[Proof of (1) of Theorem \ref{Th2.1}]
We first obtain the a priori estimate. Suppose there exists smooth solutions $u, p$ (decaying polynomially at infinity) to the Euler equations, it is obvious that the zero order energy estimate holds,
\begin{equation*}
 \frac{1}{2}\frac{d}{dt}\norm{u(t)}_{L^2}^2=0.
\end{equation*}
For $\alpha\in \mathbb{N}^3$ with $1\leq\abs{\alpha}\leq r$, we apply $\partial^\alpha$ on the Euler equation to obtain
\begin{equation}\label{3.16}
 \partial_t \partial^\alpha u+\partial^\alpha(u\cdot\nabla u)+\nabla\partial^\alpha p=0.
\end{equation}
Taking $L^2$ inner product with $\comii{x}^{2\ell}\partial^\alpha u$ on both sides of \eqref{3.16}, we obtain,
\begin{equation*}
 \frac{d}{dt}\frac{1}{2}\norm{\comii{x}^\ell \partial^\alpha u}_{L^2}^2+\comii{\comii{x}^\ell\partial^\alpha(u\cdot\nabla u),\comii{x}^\ell\partial^\alpha u}+\comii{\comii{x}^\ell \nabla\partial^\alpha p,\comii{x}^\ell\partial^\alpha u}=0.
\end{equation*}
Summing over $1\leq\abs{\alpha}\leq r$ and adding the zero order energy term, and obtain,
\begin{equation*}
\begin{aligned}
 \frac{d}{dt}\frac{1}{2}\norm{u(t)}_{H^r_\ell}^2 &+\sum_{1\leq\abs{\alpha}\leq r}\comii{\comii{x}^\ell\partial^\alpha(u\cdot\nabla u),\comii{x}^\ell\partial^\alpha u}\\
 &+\sum_{1\leq\abs{\alpha}\leq r}\comii{\comii{x}^\ell \nabla\partial^\alpha p,\comii{x}^\ell\partial^\alpha u}=0.
\end{aligned}
\end{equation*}
We note that the divergence of $u$ is zero, so the cancellation equality holds,
\begin{equation*}
 \comii{u\cdot\nabla(\comii{x}^\ell \partial^\alpha u),\comii{x}^\ell \partial^\alpha u}=0.
\end{equation*}
Then the highest order term in the convecting term is,
\begin{equation*}
\begin{split}
 \comii{\comii{x}^\ell u\cdot\nabla\partial^\alpha u,\comii{x}^\ell \partial^\alpha u}
 &=\sum_{i,j=1}^3\int_{\mathbb{R}^3} \comii{x}^{2\ell} u_i(\partial_{x_i}\partial^\alpha u_j)\partial^\alpha u_j dx\\
 &=-\comii{\comii{x}^\ell u\cdot\nabla\partial^\alpha u,\comii{x}^\ell \partial^\alpha u}\\
 &\quad -\sum_{i=1}^3\int_{\mathbb{R}^3} (\partial_{x_i}\comii{x}^{2\ell}) u_i\abs{\partial^\alpha u}^2 dx.
\end{split}
\end{equation*}
Therefore, from the fact $\nabla\comii{x}^\ell\leq C\comii{x}^\ell$ we have,
\begin{equation}\label{3.21}
\begin{split}
 \abs{\comii{\comii{x}^\ell u\cdot\nabla\partial^\alpha u,\comii{x}^\ell \partial^\alpha u}}
 &\leq {1\over2}\sum_{i}^3\int_{\mathbb{R}^3}\abs{(\partial_{x_i}\comii{x}^{2\ell}) u_i}\abs{\partial^\alpha u}^2 dx\\
 &\leq C\norm{u}_{L^\infty}\norm{u}_{H^r_\ell}^2.
\end{split}
\end{equation}
With the use of the Lemma \ref{lem3.3} and Lemma \ref{lem3.4}, we obtain,
\begin{equation*}
\begin{aligned}
 \frac{d}{dt}\frac{1}{2}\norm{u(t)}_{H^r_\ell}^2 &\leq C\norm{u}_{H^r_\ell}\sum_{1\leq\abs{\alpha}\leq r}\sum_{0\neq\beta\leq\alpha}{\alpha\choose\beta}\norm{\comii{x}^\ell\partial^\beta u\cdot\nabla\partial^{\alpha-\beta}u}_{L^2}\\
  &\quad +C\norm{u}_{L^\infty}\norm{u}_{H^r_\ell}^2+C\norm{u}_{H^r_\ell}\sum_{1\leq\abs{\alpha}\leq r}\norm{\comii{x}^\ell \nabla\partial^\alpha p}_{L^2}\\
  &\leq C\norm{u}_{H^r}\norm{u}_{H^r_\ell}^2+C\norm{u}_{L^\infty}\norm{u}_{H^r_\ell}^2.
\end{aligned}
\end{equation*}
By use of Sobolev inequality, we have,
\begin{equation}\label{3.23}
 \frac{d}{dt}\norm{u(t)}_{H^r_\ell} \leq C\norm{u(t)}_{H^r}\norm{u(t)}_{H^r_\ell}.
\end{equation}
As we already know that $u(t,x)\in C([0,T^\ast), H^r)$, where $T^*$ is the maximal time of existence of $H^r-$ solution.
Then, on one side the Grownwall's inequality implies,
\begin{equation}\label{3.24}
 \norm{u(t)}_{H^r_\ell} \leq \norm{u_0}_{H^r_\ell} \exp{\bigg(\int_0^t C\norm{u(s)}_{H^r}ds\bigg)},\ \ 0<t<T^*.
\end{equation}
On the other hand, the definition of the weighted Sobolev space with $\ell\geq0$ obviously implies,
\begin{equation*}
 \norm{u}_{H^r}\leq\norm{u}_{H^r_\ell}.
\end{equation*}
Thus we obtain from \eqref{3.23},
\begin{equation*}
 \frac{d}{dt}\norm{u(t)}_{H^r_\ell} \leq C\norm{u(t)}_{H^r_\ell}^2.
\end{equation*}
Therefore the following inequality also holds,
\begin{equation*}
 \norm{u(t)}_{H^r_\ell} \leq \frac{\norm{u_0}_{H^r_\ell}}{1-C\norm{u_0}_{H^r_\ell}t},
\end{equation*}
with $0<t<\frac{1}{C\norm{u_0}_{H^r_\ell}}<T^\ast$.

Once we have obtained the a priori estimate, we need to construct the approximate solutions $u^{(n)}, n\in\mathbb{N}$. Before that we first rewrite the Euler system into the following modified Euler system,
\begin{equation*}
\left\{
\begin{aligned}
 \partial_t u+u\cdot\nabla u+\nabla \Pi(u,u) &=0,\\
 u|_{t=0} &=u_0,
\end{aligned}
\right.
\end{equation*}
where $\Pi(u,v)$ is defined in the following way
\begin{equation*}
 -\Delta \Pi(u,v)=\sum_{i,j=1}^3 \partial_{x_i}u_j\partial_{x_j}v_i.
\end{equation*}
Noting that we did not demand that the divergence of u is zero in this modified Euler system. It is showed in \cite{BCD} that the solution $u$ to this modified Euler system is also the solution $(u, \nabla p=\nabla\Pi(u,u))$ of the standard Euler equation with the same divergence free initial data $u_0$, and conversely the solution of standard Euler equation satisfies this modified system. Thus we now focus on the modified Euler system, and we shall use the following iteration scheme to construct approximate solutions to the modified Euler system,
\begin{equation}\label{3.30}
\left\{
\begin{aligned}
 \partial_t u^{(n+1)}+u^{(n)}\cdot\nabla u^{(n+1)}+ \nabla \Pi(u^{(n)},u^{(n)})&=0,\\
  u^{(n+1)}\big|_{t=0}&=u_0,\,n=0,1,2,\ldots,
\end{aligned}
\right.
\end{equation}
where $u^{(0)}=u_0$ and $\Pi(u^{(n)},u^{(n)})$ satisfies the following condition,
\begin{equation*}
 -\Delta \Pi(u^{(n)},u^{(n)}) =\sum_{i,j=1}^3\partial_{x_i} u_j^{(n)}\partial_{x_j} u_i^{(n)}.
\end{equation*}
For given $u^{(n)}(t,x)\in L^\infty_{loc}(\mathbb{R}^+,H^r_\ell)$ with $r\geq3$, then Lemma \ref{lem3.4} indicates that the pressure $\nabla \Pi(u^{(n)},u^{(n)})(t,x)\in L^\infty_{loc}(R^+,H^r_\ell)$. And the fact that $H^r\hookrightarrow C^1$ and the Cauchy Lipschitz Theorem shows that the transport equation at least exists a solution $u^{(n+1)}(t,x)\in L^\infty_{loc}(\mathbb{R}^+,H^r_\ell)$. Moreover, applying $\partial^\alpha$ on both sides of \eqref{3.30} with $1\leq\abs{\alpha}\leq r$ and taking $L^2$-inner product with $\comii{x}^{2\ell}\partial^\alpha u^{(n+1)}$, we can obtain from the Lemma \ref{lem3.3} and Lemma \ref{lem3.4},
\begin{equation*}
\begin{split}
 \frac{d}{dt}\norm{u^{(n+1)}(t,\cdot)}_{H^r_\ell}^2 &\leq C\norm{u^{(n)}(t,\cdot)}_{H^r_\ell}\norm{u^{(n+1)}(t,\cdot)}_{H^r_\ell}^2\\
 &\quad+C\norm{u^{(n)}(t,\cdot)}_{H^r_\ell}^2\norm{u^{(n+1)}(t,\cdot)}_{H^r_\ell}.
\end{split}
\end{equation*}
Using Grownwall inequality, we have for all $n\in\mathbb{N}$ the following inequality holds
\begin{equation}\label{3.33}
 \norm{u^{(n+1)}(t,\cdot)}_{H^r_\ell} \leq e^{CU^n(t)}\bigg(\norm{u_0}_{H^r_\ell}+C\int_0^t e^{-CU^n(s)}\norm{u^{(n)}(s,\cdot)}_{H^r_\ell}^2ds  \bigg),
\end{equation}
where
$$
U^n(t)=\int_0^t \norm{u^{(n)}(s,\cdot)}_{H^r_\ell}ds.
$$
We hope to find a uniform bound for $\{u^{(n)}\}, n\in\mathbb{Z}$. Let us argue by induction, we fix a $T>0$ such that $2C\norm{u_0}_{H^r_\ell}T<1$ and obviously have
\begin{equation*}
 \norm{u^{(0)}(t)}_{H^r_\ell} \leq \frac{\norm{u_0}_{H^r_\ell}}{1-2Ct\norm{u_0}_{H^r_\ell}},\ \forall t\in [0,T].
\end{equation*}
We claim that
\begin{equation}\label{3.35}
 \norm{u^{(n)}(t)}_{H^r_\ell} \leq \frac{\norm{u_0}_{H^r_\ell}}{1-2Ct\norm{u_0}_{H^r_\ell}},\ \forall t\in [0,T], \forall n\in\mathbb{N}.
\end{equation}
Obviously, the choice of $T$ ensures the claim holds for $n=0$, let us suppose the claim holds for $n=k$, i.e.
\begin{equation*}
\begin{split}
 \norm{u^{(k)}(t)}_{H^r_\ell} &\leq \frac{\norm{u_0}_{H^r_\ell}}{1-2Ct\norm{u_0}_{H^r_\ell}}\\
\end{split}
\end{equation*}
We proceed to show the claim holds for $n=k+1$. Plugging the bound into the iterative inequality \eqref{3.33}, we have
\begin{equation*}
\begin{split}
 \norm{u^{(k+1)}(t)}_{H^r_\ell} &\leq e^{C\int_0^t \frac{\norm{u_0}_{H^r_\ell}}{1-2Cs\norm{u_0}_{H^r_\ell}}ds}\norm{u_0}_{H^r_\ell}\\
 &\quad+C\int_0^t e^{C\int_s^t \frac{\norm{u_0}_{H^r_\ell}}{1-2C\tau\norm{u_0}_{H^r_\ell}}d\tau}\frac{\norm{u_0}_{H^r_\ell}^2}{(1-2Cs\norm{u_0}_{H^r_\ell})^2}ds\\
 &\leq \frac{\norm{u_0}_{H^r_\ell}}{1-2Ct\norm{u_0}_{H^r_\ell}}.
\end{split}
\end{equation*}
Therefore we proved the claim \eqref{3.35} by induction. And we have $\{u^{(n)}\}, n\in\mathbb{N}$ is uniformly bounded in $L^\infty(0,T;H^r_\ell)$. We then are going to show that $\{u^{(n)}\}, n\in\mathbb{N}$ is a Cauchy sequence in $C(0,T;H^{r-1}_\ell)$. For this purpose, we note that for all $(n,k)\in\mathbb{N}^2$, we have from the iteration scheme,
\begin{equation*}
\begin{aligned}
(\partial_t +u^{(n+k)}\cdot\nabla)u^{(n+k+1)}+\nabla \Pi(u^{(n+k)}, u^{(n+k)}) &=0,\\
(\partial_t +u^{(n)}\cdot\nabla)u^{(n+1)}+\nabla \Pi(u^{(n)}, u^{(n)}) &=0.
\end{aligned}
\end{equation*}
Taking difference of these two equations, we have,
\begin{equation}\label{3.39}
\begin{aligned}
 (\partial_t +u^{(n+k)}\cdot\nabla)(u^{(n+k+1)}-u^{(n+1)}) &=(u^{(n)}-u^{(n+k)})\cdot\nabla u^{(n+1)}\\
 &\quad+\nabla \Pi(u^{(n)}, u^{(n)})-\nabla \Pi(u^{(n+k)}, u^{(n+k)})
\end{aligned}
\end{equation}
We now want to estimate $\norm{u^{(n+k+1)}(t,\cdot)-u^{(n+1)}(t,\cdot)}_{H^{r-1}_\ell}$. For this purpose, we take the same procedure as in the estimate of the a priori estimate. From the Lemma \ref{lem3.3} we can obtain,
\begin{equation}\label{3.40}
\begin{split}
 &\comii{u^{(n+k)}\cdot \nabla (u^{(n+k+1)}-u^{(n+1)}), u^{(n+k+1)}-u^{(n+1)}}_{H^{r-1}_\ell}\\
 &\leq C\norm{u^{(n+k)}}_{H^r_\ell}\norm{u^{(n+k+1)}-u^{(n+1)}}_{H^{r-1}_\ell}^2,
\end{split}
\end{equation}
and
\begin{equation}\label{3.41}
\begin{split}
 &\comii{(u^{(n)}-u^{(n+k)})\cdot \nabla u^{(n+1)}, u^{(n+k+1)}-u^{(n+1)}}_{H^{r-1}_\ell}\\
 &\leq C\norm{u^{(n+1)}}_{H^r_\ell}\norm{u^{(n)}-u^{(n+k)}}_{H^{r-1}_\ell}\norm{u^{(n+k+1)}-u^{(n+1)}}_{H^{r-1}_\ell}.
\end{split}
\end{equation}
Noting that $\Pi(u^{(n)},u^{(n)})-\Pi(u^{(n+k)},u^{(n+k)})$ satisfies the following form,
\begin{equation}
\begin{split}
 &-\Delta \Pi(u^{(n)}, u^{(n)})+\Delta \Pi(u^{(n+k)},u^{(n+k)})\\
 &=\sum_{i,j=1}^3 \partial_{x_i}u_j^{(n)}\partial_{x_j}u_i^{(n)}-\sum_{i,j=1}^3 \partial_{x_i}u_j^{(n+k)}\partial_{x_j}u_i^{(n+k)}\\
 &=\sum_{i,j=1}^3 \partial_{x_i}(u_j^{(n)}-u_j^{(n+k)})\partial_{x_j}u_i^{(n)}+\sum_{i,j=1}^3 \partial_{x_i}u_j^{(n+k)}\partial_{x_j}(u_i^{(n)}-u_i^{(n+k)}).
\end{split}
\end{equation}
Hence by Lemma \ref{lem3.4}, we can obtain,
\begin{equation}\label{3.43}
\begin{split}
 &\comii{\nabla\big(\Pi(u^{(n)},u^{(n)})-\Pi(u^{(n+k)},u^{(n+k)})\big),u^{(n+k+1)}-u^{(n+1)}}_{H^{r-1}_\ell}\\
 &\leq C\bigg(\norm{u^{(n)}}_{H^r_\ell}+\norm{u^{(n+k)}}_{H^r_\ell}\bigg)\norm{u^{(n)}-u^{(n+k)}}_{H^{r-1}_\ell}
 \norm{u^{(n+1)}-u^{(n+k+1)}}_{H^{r-1}_\ell}
\end{split}
\end{equation}
Taking the $H^{r-1}_\ell$-inner product with $\big(u^{(n+k+1)}-u^{(n)}\big)$ on both sides of \eqref{3.39} and
combining \eqref{3.40}, \eqref{3.41} and \eqref{3.43}, we can have,
\begin{equation*}
\begin{aligned}
 &\frac{1}{2}\frac{d}{dt} \norm{u^{(n+k+1)}-u^{(n+1)}}_{H^{r-1}_\ell}^2 \\
 &\leq C\norm{u^{(n+k)}}_{H^r_\ell}\norm{u^{(n+k+1)}-u^{(n+1)}}_{H^{r-1}_\ell}^2\\
 &\quad+C\norm{u^{(n+1)}}_{H^r_\ell}\norm{u^{(n)}-u^{(n+k)}}_{H^{r-1}_\ell}\norm{u^{(n+k+1)}-u^{(n+1)}}_{H^{r-1}_\ell}\\
 &\quad+C\bigg(\norm{u^{(n)}}_{H^r_\ell}+\norm{u^{(n+k)}}_{H^r_\ell}\bigg)\norm{u^{(n)}-u^{(n+k)}}_{H^{r-1}_\ell}
 \norm{u^{(n+1)}-u^{(n+k+1)}}_{H^{r-1}_\ell}
\end{aligned}
\end{equation*}
Then the Grownwall inequality shows,
\begin{equation*}
\begin{aligned}
 &\norm{u^{(n+k+1)}(t)-u^{(n+1)}(t)}_{H^{r-1}_\ell}\leq C e^{C U^{(n+k)}(t)}\\
 &\quad\times\int_0^t e^{-CU^{(n+k)}(s)}\norm{u^{(n+k)}(s)-u^{(n)}(s)}_{H^{r-1}_\ell}\bigg(\norm{u^{(n)}(s)}_{H^r_\ell}+\norm{u^{(n+1)}(s)}_{H^r_\ell}\\
 &\quad\quad+\norm{u^{(n+k)}(s)}_{H^r_\ell} \bigg)ds.
\end{aligned}
\end{equation*}
Since $\{u^{(n)}\}, {n\in\mathbb{N}}$ is bounded in $L^\infty(0,T;H^r_\ell)$, we can find a constant $C_T$, independent of n and k, and such that for all t in $[0,T]$, we have
$$
 \norm{u^{(n+k+1)}(t)-u^{(n+1)}(t)}_{H^{r-1}_\ell}\leq C_T\int_0^t \norm{u^{(n+k)}(s)-u^{(n)}(s)}_{H^{r-1}_\ell}ds.
$$
Hence, arguing by induction, we get
$$
 \sup_{t\in[0,T]}\norm{u^{(n+k+1)}(t)-u^{(n+1)}(t)}_{H^{r-1}_\ell}\leq \frac{(T C_T)^{n+1}}{(n+1)!}\sup_{t\in[0,T]}\norm{u^{(k)}(t)-u^{(0)}(t)}_{H^{r-1}_\ell}.
$$
Since $\sup_{t\in[0,T]}\norm{u^{(k)}(t)}_{H^r_\ell}$ is bounded independent of k, we can guarantee the existence of some new constant $C_T^\prime$ such that
$$
 \sup_{t\in[0,T]}\norm{u^{(n+k)}(t)-u^{(n)}(t)}_{H^{r-1}_\ell}\leq C_T^\prime \frac{(T C_T)^{n+1}}{(n+1)!} .
$$
Hence, $\{u^{(n)}\}_{n\in\mathbb{N}}$ is a Cauchy sequence in $C(0,T;H^{r-1}_\ell)$ and converges to some limit function $u\in C(0,T; H^{r-1}_\ell)$. We have to check that $u$ belongs to $L^\infty(0,T;H^r_\ell)$ and satisfies Euler equation. Since $\{u^{(n)}\}_{n\in\mathbb{N}}$ is bounded in $L^\infty(0,T;H^r_\ell)$, the Fatou property for Sobolev space guarantees that u also belongs to $L^\infty(0,T;H^r_\ell)$. Now, as $(u^n)_{n\in\mathbb{N}}$ converges to u in $C(0,T;H^{r-1}_\ell)$, it is then easy to pass to the limit in \eqref{3.30} and to concludes that u is indeed a solution of the modified Euler equation. Since $u_0$ is divergence free, we have $u$ is divergence free. Let $\nabla p=\nabla \Pi(u,u)$ which is defined by \eqref{3.31}, we finally obtain that $u, \nabla p$ is the solution of \eqref{1.1}. Then the first part $(1)$ of Theorem \ref{Th2.1} is proved.
\end{proof}

\section{Weighted Gevrey-class regularity of Euler equation}\label{Sec4}
In this section, we will consider the weighted Gevrey-class regularity of Euler equation. It is showed in \cite{KV} that the solution remains in Gevrey-class if the initial data was so, and the decay of the radius of Gevrey-class regularity can also be obtained explicitly. In the following, we will show that the weighted Gevrey-class regularity also propagate and the radius of weighted Gevrey-class can be obtained explicitly, i.e. we will prove the second part (2) of the main Theorem \ref{Th2.1}.


\begin{proof}[Proof of (2) Theorem \ref{Th2.1}]
Since the initial data $u_0$ is of weighted Gevrey class s, then there exists $\tau(0)>0$ such that $u_0\in X_{\tau_0,\ell}$. Let $u(t,x)$ be of weighted Gevrey class s and also the $H^r_\ell-$ solution obtained in the previous section and suppose $\tau(t)$ is a smooth function of $t$, then we have
\begin{equation}\label{4.1}
 \frac{d}{dt}\norm{u(t)}_{X_{\tau(t),\ell}}=\dot{\tau}\norm{u(t)}_{Y_{\tau,\ell}}+
 \sum_{m=3}^\infty\frac{d}{dt}\abs{u(t)}_{m,\ell}\frac{\tau^{m-3}}{(m-3)!^s}.
\end{equation}
Going back to the Equation,
$$
 \partial_t u+u\cdot\nabla u+\nabla p=0.
$$
Applying $\partial^\alpha$ with $\abs{\alpha}=m$, taking the $L^2-$ inner product with $\comii{x}^{2\ell}\partial^\alpha u(x)$, we obtain
$$
 \comii{\partial_t \partial^\alpha u,\comii{x}^{2\ell}\partial^\alpha u}+\comii{\partial^\alpha(u\cdot\nabla u),\comii{x}^{2\ell}\partial^\alpha u}+\comii{\nabla\partial^\alpha p,\comii{x}^{2\ell}\partial^\alpha u}=0.
$$
From \eqref{3.21}, we have
$$
 \abs{\comii{\comii{x}^\ell u\cdot\nabla\partial^\alpha u,\comii{x}^\ell\partial^\alpha u}}\leq C\norm{u}_{L^\infty}\norm{\comii{x}^\ell \partial^\alpha u}_{L^2}^2,
$$
where we have used the fact that $\nabla\comii{x}^\ell\leq C\comii{x}^\ell$.
We then have
\begin{equation*}
\begin{aligned}
 \frac{d}{dt}\norm{\comii{x}^\ell \partial^\alpha u(t)}_{L^2} &\leq \sum_{0\neq\beta\leq\alpha}{\alpha\choose\beta}\norm{\comii{x}^\ell \partial^\beta u\cdot\nabla\partial^{\alpha-\beta}u}_{L^2}+\norm{\comii{x}^\ell\nabla\partial^\alpha p}_{L^2}\\
 &\quad+C\norm{u}_{L^\infty}\norm{\comii{x}^\ell \partial^\alpha u}_{L^2}.
\end{aligned}
\end{equation*}
Summing over $\abs{\alpha}=m$ yields
\begin{equation*}
\begin{aligned}
 \frac{d}{dt}\abs{u(t)}_{m,\ell} &\leq \sum_{\abs{\alpha}=m}\sum_{0\neq\beta\leq\alpha}{\alpha\choose\beta}\norm{\comii{x}^\ell \partial^\beta u\cdot\nabla\partial^{\alpha-\beta}u}_{L^2}+\sum_{\abs{\alpha}=m}\norm{\comii{x}^\ell\nabla\partial^\alpha p}_{L^2}\\
  &\quad+C\norm{u}_{L^\infty}\sum_{\abs{\alpha}=m}\norm{\comii{x}^\ell \partial^\alpha u}_{L^2}.
\end{aligned}
\end{equation*}
Plugging into \eqref{4.1} to obtain
\begin{equation}\label{4.4}
\begin{aligned}
 \frac{d}{dt}\norm{ u(t)}_{X_{\tau(t),\ell}} &\leq \dot{\tau}\norm{ u(t)}_{Y_{\tau,\ell}}+\sum_{m=3}^\infty\frac{d}{dt}\abs{u(t)}_{m,\ell}\frac{\tau^{m-3}}{(m-3)!^s}\\
  &\leq \dot{\tau}\norm{ u(t)}_{Y_{\tau,\ell}}+\mathcal{C}_\ell+\mathcal{P}_\ell
  +C\norm{u}_{L^\infty}\sum_{m=3}^\infty\abs{u(t)}_{m,\ell}\frac{\tau^{m-3}}{(m-3)!^s}\\
  &\leq \dot{\tau}\norm{ u(t)}_{Y_{\tau,\ell}}+\mathcal{C}_\ell+\mathcal{P}_\ell+C\norm{u}_{L^\infty}\abs{ u}_{3,\ell}+C\tau\norm{u}_{L^\infty}
  \norm{ u}_{Y_{\tau,\ell}}
\end{aligned}
\end{equation}
where
$$
 \mathcal{C}_\ell=\sum_{m=3}^\infty\sum_{\abs{\alpha}=m}\sum_{0\neq\beta\leq\alpha}{\alpha\choose\beta}
 \norm{\comii{x}^\ell\partial^\beta u\cdot\nabla \partial^{\alpha-\beta}u}_{L^2}\frac{\tau^{m-3}}{(m-3)!^s}
$$
and
$$
 \mathcal{P}_\ell=\sum_{m=3}^\infty\sum_{\abs{\alpha}=m}\norm{\comii{x}^\ell\nabla\partial^\alpha p}_{L^2}\frac{\tau^{m-3}}{(m-3)!^s}
$$
It remains to estimate $\mathcal{C}_\ell$ and $\mathcal{P}_\ell$. We will follow the argument of Kukavica and Vicol in their work \cite{KV} to estimate $\mathcal{C}_\ell$ and $\mathcal{P}_\ell$. And the estimate of $\mathcal{P}_\ell$ follows from the consequence of Calder\'on-Zygmund theory with $\mathcal{A}_2$ weights(see \cite{SEM}). Let us first state a Lemma which will be used throughout the estimate of $\mathcal{C}_\ell$ and $\mathcal{P}_\ell$.
\begin{lemma}
Let $\{x_\lambda\}_{\lambda\in\mathbb{N}^3}$ and $\{y_\lambda\}_{\lambda\in\mathbb{N}^3}$ be real numbers, then the following identity holds,
\begin{equation}\label{4.5}
\sum_{\abs{\alpha}=m}\sum_{\abs{\beta}=j,\beta\leq\alpha}x_{\beta}y_{\alpha-\beta} =\bigg(\sum_{\abs{\beta}=j}x_\beta\bigg) \bigg(\sum_{\abs{\gamma}=m-j}y_\gamma\bigg).
\end{equation}
\end{lemma}
The proof is trivial by relabeling the multi-indexes, we thus omit the details. Another fact shall be used in the following is that,
\begin{equation*}
 {\alpha\choose\beta}\leq {\abs{\alpha}\choose\abs{\beta}},
\end{equation*}
holds for $\beta\leq\alpha\in\mathbb{N}_0^3$.
\begin{lemma}\label{lem4.2}
The estimate of $\mathcal{C}_\ell$ satisfies the following form
\begin{equation*}
\begin{aligned}
 \mathcal{C}_\ell &\leq C\norm{u}_{H^r_\ell}^2(1+\tau^2)+C\tau\norm{u}_{H^r}\norm{u}_{Y_{\tau,\ell}}+
 C\tau^{3/2}\norm{u}_{X_\tau}\norm{u}_{Y_{\tau,\ell}}\\
 &+C\tau^2\norm{u}_{H^r}\norm{u}_{Y_{\tau,\ell}}+C\tau^3\norm{u}_{H^r}\norm{u}_{Y_{\tau,\ell}}.
\end{aligned}
\end{equation*}
\end{lemma}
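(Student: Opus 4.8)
The plan is to collapse the triple sum defining $\mathcal{C}_\ell$ into one-dimensional series in $m$ and $j=\abs{\beta}$, and then to split the sum over $j$ into three regimes according to which of the two factors can be placed in $L^\infty$. First I would bound $\binom{\alpha}{\beta}\leq\binom{m}{j}$ with $m=\abs{\alpha}$, pull this factor out of the sum over multi-indices, and apply H\"older's inequality to each summand $\norm{\comii{x}^\ell\partial^\beta u\cdot\nabla\partial^{\alpha-\beta}u}_{L^2}$. The combinatorial identity \eqref{4.5} then turns $\sum_{\abs{\alpha}=m,\,\beta\leq\alpha,\,\abs{\beta}=j}$ into a product of a sum over $\abs{\beta}=j$ and a sum over $\abs{\gamma}=m-j$, so that in each regime the summand factors into two pieces which, after reindexing the $m$-sum, assemble into $X$- or $Y$-type norms.

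In the low regime $j\in\{1,2\}$ and the high regime $j\in\{m-2,m-1,m\}$ one of the factors is a derivative of order at most $r-2$; since $r\geq 5$ I place it in $L^\infty$ by the Sobolev embedding $\norm{\partial^\gamma u}_{L^\infty}\leq C\norm{u}_{H^r}$ (using $\norm{\nabla u}_{L^\infty}\leq C\norm{u}_{H^r}$ when $j=m$) and keep the weighted factor in $L^2$, whose sum over multi-indices is $\abs{u}_{k,\ell}$ at the relevant order $k$. Comparing the weight $\binom{m}{j}\tau^{m-3}/(m-3)!^s$ against the $Y_{\tau,\ell}$-weight $(k-3)\tau^{k-4}/(k-3)!^s$ after reindexing, and using $s\geq 1$ to absorb factorial ratios together with the polynomial-in-$m$ prefactors, the cases $j=1,m$ yield the $\tau\norm{u}_{H^r}\norm{u}_{Y_{\tau,\ell}}$ term, the cases $j=2,m-1$ the $\tau^2$ term, and $j=m-2$ the $\tau^3$ term. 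The finitely many small-index terms, those with $m\leq 5$ for which the factor $k-3$ is unavailable to build a $Y$-weight, carry $\tau^{m-3}\leq\tau^2$ and are bounded directly by $\norm{u}_{H^r_\ell}^2$, producing the contribution $\norm{u}_{H^r_\ell}^2(1+\tau^2)$.

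The main work is the middle regime $3\leq j\leq m-3$ (hence $m\geq 6$), where neither factor has low order. Here I would split by H\"older as $\norm{\partial^\beta u}_{L^6}\norm{\comii{x}^\ell\nabla\partial^{\alpha-\beta}u}_{L^3}$, estimate the first factor by the Sobolev inequality $\norm{\partial^\beta u}_{L^6}\leq C\abs{u}_{j+1}$, which is unweighted and hence feeds $\norm{u}_{X_\tau}$, and the second by the Gagliardo-Nirenberg inequality $\norm{g}_{L^3}\leq C\norm{g}_{L^2}^{1/2}\norm{\nabla g}_{L^2}^{1/2}$ with $g=\comii{x}^\ell\nabla\partial^{\alpha-\beta}u$, using $\abs{\nabla\comii{x}^\ell}\leq C\comii{x}^\ell$ to dominate $\nabla g$ by weighted derivatives of orders $m-j+1$ and $m-j+2$, which feed $\norm{u}_{Y_{\tau,\ell}}$. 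The first factor carries order $j+1$ and the second an effective order $m-j+\tfrac32$; balancing these against the total order $m$ is exactly what forces the extra power $\tau^{3/2}$. After applying \eqref{4.5}, handling the fractional powers by a Cauchy-Schwarz step in the sum over $\gamma$, and summing the resulting convolution in $m$ by a discrete Young inequality, the two series separate and give the $\tau^{3/2}\norm{u}_{X_\tau}\norm{u}_{Y_{\tau,\ell}}$ term.

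I expect the principal obstacle to be the factorial bookkeeping in this middle regime. Because the Gagliardo-Nirenberg step spreads the fractional weights $\tfrac12,\tfrac12$ over two neighbouring orders, one must verify that $\binom{m}{j}$, multiplied by the ratio of the factorials of $j$ and of $m-j$ to $(m-3)!^s$, stays uniformly bounded in $j$ and $m$; this is the Gevrey-algebra type inequality, and it is here that the hypothesis $s\geq 1$ is used in full. The weight enters this lemma only benignly, through $\abs{\nabla\comii{x}^\ell}\leq C\comii{x}^\ell$, so that every gradient falling on a weighted factor $\comii{x}^\ell\partial^\gamma u$ may be traded for weighted derivatives one order higher; this bound, valid for every $\ell\geq 0$, is what allows the weighted quantities $\abs{u}_{k,\ell}$ to reassemble into $\norm{u}_{Y_{\tau,\ell}}$.
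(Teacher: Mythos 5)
Your overall skeleton --- Leibniz expansion, grouping by $j=\abs{\beta}$ via the identity \eqref{4.5}, and handling $j\in\{1,2,m-2,m-1,m\}$ by placing the low-order factor in $L^\infty$ through $\norm{\partial^\gamma u}_{L^\infty}\leq C\norm{u}_{H^r}$ --- coincides with the paper's proof, and those parts are fine. The gap is in the middle regime $3\leq j\leq m-3$, at exactly the step you flag as ``the principal obstacle'': for your choice of interpolation the required combinatorial bound is \emph{false}, so the argument does not close. With your splitting, the $(m,j)$ term is controlled by ${m\choose j}\frac{\tau^{m-3}}{(m-3)!^s}\abs{u}_{j+1}\abs{u}_{m-j+1,\ell}^{1/2}\abs{u}_{m-j+2,\ell}^{1/2}$, and matching this against $\tau^{3/2}$ times the $X_\tau$-summand at order $j+1$ and the square roots of the $Y_{\tau,\ell}$-summands at orders $m-j+1$, $m-j+2$ forces the correction factor (take $s=1$, which the hypotheses allow)
\begin{equation*}
\mathcal{B}_{m,j}={m\choose j}\frac{(j-2)!\,\big[(m-j-2)!\,(m-j-1)!\big]^{1/2}}{(m-3)!\,\big[(m-j-2)(m-j-1)\big]^{1/2}}
=\frac{m(m-1)(m-2)}{j(j-1)(m-j)(m-j-1)(m-j-2)^{1/2}}.
\end{equation*}
Its denominator supplies only about $j^2(m-j)^{5/2}$ against the numerator's $m^3$, which is insufficient at both ends of the range: $\mathcal{B}_{m,3}\sim m^{1/2}$ and $\mathcal{B}_{m,m-3}\sim m$ as $m\to\infty$. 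Since $X_\tau$ and $Y_{\tau,\ell}$ control only the sums of their summands, the discrete Young/convolution step you invoke requires $\sup_{m,j}\mathcal{B}_{m,j}<\infty$; with an unbounded kernel no bilinear bound of the form $C\tau^{3/2}\norm{u}_{X_\tau}\norm{u}_{Y_{\tau,\ell}}$ can follow.

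The failure has two distinct sources, and the paper's proof is built precisely to avoid both. First, your Gagliardo--Nirenberg step $\norm{g}_{L^3}\leq C\norm{g}_{L^2}^{1/2}\norm{\nabla g}_{L^2}^{1/2}$ spreads fractional order onto the \emph{high-order, weighted} factor, replacing its factorial $(m-j-2)!$ by $[(m-j-2)!\,(m-j-1)!]^{1/2}$ and thereby losing $(m-j)^{1/2}$; the paper instead keeps the high-order factor untouched in weighted $L^2$ (so it enters $Y_{\tau,\ell}$ at its exact order, retaining the full decay $(m-j)(m-j-1)(m-j-2)$) and performs \emph{all} interpolation on the low-order factor via $\norm{f}_{L^\infty}\leq C\norm{f}_{L^2}^{1/4}\norm{D^2f}_{L^2}^{3/4}$, where the order-spreading --- and the entire $\tau^{3/2}$ excess --- is harmless. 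Second, you keep one fixed H\"older assignment (weight on $\nabla\partial^{\alpha-\beta}u$, so $\partial^\beta u$ always feeds $X_\tau$) over all of $3\leq j\leq m-3$, whereas the paper splits at $j=[m/2]$ and switches the assignment between \eqref{4.11} and \eqref{4.12}, so that the \emph{higher}-order factor is always the one fed into $Y_{\tau,\ell}$. Only then is $m(m-1)(m-2)$ absorbed --- by $(m-j)(m-j-1)(m-j-2)$ when $j\leq[m/2]$, and by $j(j-1)(j-2)(j-3)$ when $j\geq[m/2]+1$ --- which is what makes the paper's factors $\mathcal{A}_{m,j,s}$ and $\mathcal{A}^\prime_{m,j,s}$ bounded. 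Note also that no rearrangement of your $L^6\times L^3$ split can be rescued: the embedding $\dot H^1\hookrightarrow L^6$ raises by a full unit the order of whichever factor it touches, so when $j=O(1)$ and all three powers of $m$ must come from the high-order side, any scheme that raises the high factor's effective order by $\delta>0$ loses $(m-j)^{\delta}$ and fails; the excess must be charged entirely to the low-order factor, as in the paper.
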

\begin{proof}
Inspired by \eqref{4.5}, if we denote
$$
 \mathcal{C}_{\ell,m,j}=\frac{\tau^{m-3}}{(m-3)!^s}\sum_{\abs{\alpha}=m}\sum_{\abs{\beta}=j,\beta\leq\alpha}{\alpha\choose\beta}
 \norm{\comii{x}^\ell\partial^\beta u\cdot\nabla\partial^{\alpha-\beta}u}_{L^2},
$$
then the summation of $\mathcal{C}_\ell$ can be rewritten in the following form
$$
 \mathcal{C}_\ell=\sum_{m=3}^\infty \sum_{j=1}^m \mathcal{C}_{\ell,m,j}.
$$
We then divide the right side of the above equality into seven terms according to the values of m and j,
\begin{equation}\label{4.8}
\begin{split}
 \mathcal{C}_\ell &=\sum_{m=3}^\infty \mathcal{C}_{\ell,m,1}+\sum_{m=3}^\infty \mathcal{C}_{\ell,m,2}+\sum_{m=6}^\infty \sum_{j=3}^{[m/2]}\mathcal{C}_{\ell,m,j}+\sum_{m=7}^\infty \sum_{j=[m/2]+1}^{m-3}\mathcal{C}_{\ell,m,j}\\
 &\quad+\sum_{m=5}^\infty \mathcal{C}_{\ell,m,m-2}+\sum_{m=4}^\infty \mathcal{C}_{\ell,m,m-1}+\sum_{m=3}^\infty \mathcal{C}_{\ell,m,m}.
\end{split}
\end{equation}
The we are supposed to estimate the right hand of \eqref{4.8} in terms of the Sobolev norms and Gevrey norms. Consequently, we have for j=1,
\begin{equation}\label{4.9}
\begin{split}
 \sum_{m=3}^\infty \mathcal{C}_{\ell,m,1} &\leq \sum_{m=3}^\infty \frac{m\tau^{m-3}}{(m-3)!^s} \abs{u}_{1,\infty} \abs{u}_{m,\ell}\\
 &\leq C\abs{u}_{1,\infty}\abs{u}_{3,\ell}+C\tau\abs{u}_{1,\infty}\norm{u}_{Y_{\tau,\ell}}\\
 &\leq C\norm{u}_{H^r_\ell}^2+C\tau \norm{u}_{H^r} \norm{u}_{Y_{\tau,\ell}},
\end{split}
\end{equation}
where we used the Sobolev embedding inequality in the last estimate of the above inequality. When $j=2$, we have,
\begin{equation}\label{4.10}
\begin{split}
 \sum_{m=3}^\infty \mathcal{C}_{\ell,m,2} &\leq \sum_{m=3}^\infty \frac{\tau^{m-3}}{(m-3)!^s}{m\choose 2}\abs{u}_{2,\infty} \abs{u}_{m-1,\ell}\\
 &\leq 3\abs{u}_{2,\infty}\abs{u}_{2,\ell}+6\tau \abs{u}_{2,\infty}\abs{u}_{3,\ell}\\
 &\quad+\tau^2\abs{u}_{2,\infty}\sum_{m=5}^\infty \abs{u}_{m-1,\ell}\frac{(m-4)\tau^{m-5}}{(m-4)!^s}\bigg({m\choose2}\frac{1}{(m-4)(m-3)^s}\bigg)\\
 &\leq C\norm{u}_{H^r_\ell}^2+C\tau\norm{u}_{H^r_\ell}^2+C\tau^2\norm{u}_{H^r}\norm{u}_{Y_{\tau,\ell}},
\end{split}
\end{equation}
where we use the fact that there exists a constant $C$ such that
$$\bigg({m\choose2}\frac{1}{(m-4)(m-3)^s}\bigg)\leq C,$$
for all $m\geq5$. When $j$ varies from $3$ to $m-3$, we shall use the following Sobolev inequality,
$$
 \norm{u}_{L^\infty} \leq C\norm{u}_{L^2}^{1/4}\norm{D^2 u}_{L^2}^{3/4}.
$$
For example when $j$ varies from $3$ to $[m/2]$, we have,
\begin{equation}\label{4.11}
\begin{split}
 \sum_{m=6}^\infty \sum_{j=3}^{[m/2]}\mathcal{C}_{\ell,m,j} &\leq \sum_{m=6}^\infty \sum_{j=3}^{[m/2]} \frac{\tau^{m-3}}{(m-3)!^s}{m\choose j}\abs{u}_{j,\infty}\abs{u}_{m-j+1,\ell}\\
 &\leq C\sum_{m=6}^\infty \sum_{j=3}^{[m/2]} \frac{\tau^{m-3}}{(m-3)!^s}{m\choose j}\abs{u}_{j}^{1/4}\abs{u}_{j+2}^{3/4}\abs{u}_{m-j+1,\ell}\\
 &\leq C\tau^{3/2}\sum_{m=6}^\infty\sum_{j=3}^{[m/2]}\bigg[ \bigg(\abs{u}_j\frac{\tau^{j-3}}{(j-3)!^s}\bigg)^{1/4}\bigg(\abs{u}_{j+2}\frac{\tau^{j-1}}{(j-1)!^s}\bigg)^{3/4}\\
 &\quad\times\bigg(\abs{u}_{m-j+1,\ell}\frac{(m-j-2)\tau^{m-j-3}}{(m-j-2)!^s}\bigg)\mathcal{A}_{m,j,s}\bigg]\\
 &\leq C\tau^{3/2}\norm{u}_{X_\tau}\norm{u}_{Y_{\tau,\ell}},
\end{split}
\end{equation}
where
$$\mathcal{A}_{m,j,s}={m\choose j}\frac{(m-j-2)!^s (j-1)!^{3s/4}(j-3)!^{s/4}}{(m-j-2)(m-3)!^s}$$
is bounded by some constant $C$ for $3\leq j\leq m/2, s\geq1$. One can justify this fact by expressing $\mathcal{A}_{m,j,s}$ as
\begin{equation*}
 \mathcal{A}_{m,j,s} ={{m-3}\choose{j-1}}^{1-s} \frac{m(m-1)(m-2)}{j(m-j)(m-j-1)(m-j-2)(j-1)^{s/4}(j-2)^{s/4}}.
\end{equation*}
One then easily see that for $3\leq j\leq m/2, s\geq1$,
\begin{equation*}
 \mathcal{A}_{m,j,s} \lesssim \frac{1}{j(j-1)^{s/4}(j-2)^{s/4}}\leq C.
\end{equation*}
When $j$ varies from $[m/2]+1$ to $m-3$, we can symmetrically have,
\begin{equation}\label{4.12}
\begin{split}
 \sum_{m=7}^\infty \sum_{j=[m/2]+1}^{m-3}\mathcal{C}_{\ell,m,j} &\leq \sum_{m=6}^\infty \sum_{j=[m/2]+1}^{m-3} \frac{\tau^{m-3}}{(m-3)!^s}{m\choose j}\abs{u}_{j,\ell}\abs{u}_{m-j+1,\infty}\\
 &\leq C\sum_{m=6}^\infty \sum_{j=[m/2]+1}^{m-3} \frac{\tau^{m-3}}{(m-3)!^s}{m\choose j}\abs{u}_{j,\ell}\abs{u}_{m-j+1}^{1/4}\abs{u}_{m-j+3}^{3/4}\\
 &\leq C\tau^{3/2}\sum_{m=6}^\infty\sum_{j=[m/2]+1}^{m-3} \mathcal{A}^\prime_{m,j,s}\bigg[ \bigg(\abs{u}_{j,\ell}\frac{(j-3)\tau^{j-4}}{(j-3)!^s}\bigg)\\
 &\quad\times\bigg(\abs{u}_{m-j+1}\frac{\tau^{m-j-2}}{(m-j-2)!^s}\bigg)^{1/4}
 \bigg(\abs{u}_{m-j+3,\ell}\frac{\tau^{m-j}}{(m-j)!^s}\bigg)^{3/4}\bigg]\\
 &\leq C\tau^{3/2}\norm{u}_{X_\tau}\norm{u}_{Y_{\tau,\ell}},
\end{split}
\end{equation}
where
$$\mathcal{A}^\prime_{m,j,s}={m\choose j}\frac{(j-3)!^s (m-j-2)!^{s/4}(m-j)!^{3s/4}}{(j-3)(m-3)!^s}$$
is also bounded by some constant $C$ for $m/2\leq j\leq m-3, s\geq1$ because it can be expressed as
\begin{equation*}
 \mathcal{A}^\prime_{m,j,s} ={{m-3}\choose{j-3}}^{1-s} \frac{m(m-1)(m-2)}{j(j-1)(j-2)(j-3)(m-j-1)^{s/4}(m-j)^{s/4}}<C.
\end{equation*}
When $j=m-2$, we have
\begin{equation}\label{4.13}
\begin{split}
 \sum_{m=5}^\infty \mathcal{C}_{\ell,m,m-2} &\leq C\abs{u}_{3,\infty}\abs{u}_{3,\ell}\tau^2+C\abs{u}_{3,\infty}\tau^3\sum_{m=6}^\infty \abs{u}_{m-2,\ell} \frac{(m-5)\tau^{m-6}}{(m-5)!^s}\\
 &\quad\times{m\choose 2}\frac{1}{(m-5)(m-3)^s(m-4)^s}\\
 &\leq C\norm{u}_{H^r_\ell}^2\tau^2+C\tau^3\norm{u}_{H^r}\norm{u}_{Y_{\tau,\ell}}.
\end{split}
\end{equation}
When $j=m-1$, we similarly have,
\begin{equation}\label{4.14}
\begin{split}
 \sum_{m=4}^\infty \mathcal{C}_{m,m-1} &\leq \sum_{m=4}^\infty m \abs{u}_{2,\infty}\abs{u}_{m-1,\ell}\frac{\tau^{m-3}}{(m-3)!^s}\\
 &\leq C\tau\abs{u}_{2,\infty}\abs{u}_{3,\ell}+C\tau^2\abs{u}_{2,\infty}\norm{u}_{Y_{\tau,\ell}}\\
 &\leq C\norm{u}_{H^r_\ell}^2\tau+C\tau^2\norm{u}_{H^r}\norm{u}_{Y_{\tau,\ell}}.
\end{split}
\end{equation}
Lastly, we have
\begin{equation}\label{4.15}
\begin{split}
 \sum_{m=3}^\infty \mathcal{C}_{m,m} &\leq \sum_{m=3}^\infty  \abs{u}_{1,\infty}\abs{u}_{m,\ell}\frac{\tau^{m-3}}{(m-3)!^s}\\
 &\leq C\abs{u}_{1,\infty}\abs{u}_{3,\ell}+C\tau\abs{u}_{1,\infty}\norm{u}_{Y_{\tau,\ell}}\\
 &\leq C\norm{u}_{H^r_\ell}^2+C\tau\norm{u}_{H^r}\norm{u}_{Y_{\tau,\ell}}.
\end{split}
\end{equation}
Plugging the estimates \eqref{4.9}-\eqref{4.15} into \eqref{4.8}, we then prove the Lemma \ref{lem4.2}.
\end{proof}

\begin{lemma}\label{lem4.3}
The estimate of $\mathcal{P}_\ell$ satisfies the following form
\begin{equation*}
\begin{aligned}
 \mathcal{P}_\ell &\leq C\norm{u}_{H^r_\ell}^2+C\tau\norm{u}_{H^r}\big(\norm{u}_{H^r_\ell}+\norm{u}_{Y_{\tau,\ell}}\big)+
 C\tau^{3/2}\norm{u}_{X_\tau}\norm{u}_{Y_{\tau,\ell}}\\
 &+C\tau^2\norm{u}_{H^r}\big(\norm{u}_{H^r_\ell}+\norm{u}_{Y_{\tau,\ell}}\big)
 +C\tau^3\norm{u}_{H^r}\norm{u}_{Y_{\tau,\ell}}.
\end{aligned}
\end{equation*}
\end{lemma}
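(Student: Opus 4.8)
The plan is to reduce the pressure term $\mathcal{P}_\ell$ to an expression of exactly the same bilinear and combinatorial shape as the convection term $\mathcal{C}_\ell$, and then to reuse the seven-range case analysis of Lemma \ref{lem4.2} essentially verbatim. First I would fix $\alpha$ with $\abs{\alpha}=m\geq3$ and, as in the proof of Lemma \ref{lem3.4}, choose a multi-index $\alpha^\prime\leq\alpha$ with $\abs{\alpha^\prime}=m-1$. Writing $\nabla\partial^\alpha p$ as a second-order derivative of $\partial^{\alpha^\prime}p$ and invoking the weighted Calder\'on--Zygmund inequality \eqref{3.10}, which is available precisely because $0\leq\ell<3/2$ places $\comii{x}^\ell$ in the admissible $\mathcal{A}_2$ range, together with the Poisson equation for $\partial^{\alpha^\prime}p$, I obtain
\begin{equation*}
 \norm{\comii{x}^\ell\nabla\partial^\alpha p}_{L^2}\leq C\norm{\comii{x}^\ell\Delta\partial^{\alpha^\prime}p}_{L^2}\leq C\sum_{i,j=1}^3\norm{\comii{x}^\ell\partial^{\alpha^\prime}\inner{\partial_{x_i}u_j\partial_{x_j}u_i}}_{L^2},
\end{equation*}
with a constant $C$ depending only on $\ell$, hence independent of $\alpha$.

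Next I would expand $\partial^{\alpha^\prime}(\partial_{x_i}u_j\partial_{x_j}u_i)$ by the Leibniz formula and bound the multi-index binomials by $\binom{\alpha^\prime}{\gamma}\leq\binom{m-1}{\abs{\gamma}}$. Each resulting summand is a product $\partial^\gamma\partial_{x_i}u_j\cdot\partial^{\alpha^\prime-\gamma}\partial_{x_j}u_i$ of a derivative of order $\abs{\gamma}+1$ and one of order $m-\abs{\gamma}$; setting $j=\abs{\gamma}+1$ this is the same structure (a factor of order $j$ times a factor of order $m-j+1$, total order $m+1$) that appears in $\mathcal{C}_{\ell,m,j}$. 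After summing over $\abs{\alpha}=m$ and applying the relabeling identity \eqref{4.5}, $\mathcal{P}_\ell$ therefore splits into the same ranges of $(m,j)$ as in \eqref{4.8}. In each range I would distribute the weight $\comii{x}^\ell$ onto the higher-order factor, estimate the lower-order factor in $L^\infty$ (by Sobolev embedding for $j=1,2,m-1,m$, and by the Gagliardo--Nirenberg inequality $\norm{\cdot}_{L^\infty}\leq C\norm{\cdot}_{L^2}^{1/4}\norm{D^2\cdot}_{L^2}^{3/4}$ for the intermediate ranges), and control the combinatorial weights by the bounded factors $\mathcal{A}_{m,j,s}$ and $\mathcal{A}^\prime_{m,j,s}$ already introduced in Lemma \ref{lem4.2}. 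The bookkeeping of the low-order terms, where both factors now carry an extra derivative relative to $\mathcal{C}_\ell$, produces the additional contributions $C\tau\norm{u}_{H^r}\norm{u}_{H^r_\ell}$ and $C\tau^2\norm{u}_{H^r}\norm{u}_{H^r_\ell}$ that distinguish the stated bound from that of $\mathcal{C}_\ell$.

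The main obstacle, and the reason the weight exponent is confined to $[0,3/2)$, is the very first step. Without the weight, $\nabla\partial^\alpha p$ would be controlled by orthogonality in $L^2$ (or Plancherel), but the weight destroys this structure, so the only available substitute is the weighted Calder\'on--Zygmund / $\mathcal{A}_2$ estimate \eqref{3.10}. The point requiring care is that this estimate must be applied uniformly across the infinitely many indices $\alpha^\prime$ entering the Gevrey sum; this is legitimate because the constant in \eqref{3.10} depends only on $\ell$, $p=2$ and the dimension $N=3$, and not on the function being differentiated, so a single $C$ survives the summation over $m$. Once this reduction is in place, the remaining estimates are a line-by-line repetition of \eqref{4.9}--\eqref{4.15}, and summing them yields the asserted bound for $\mathcal{P}_\ell$.
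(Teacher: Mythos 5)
Your proposal is correct and follows essentially the same route as the paper: reduce $\norm{\comii{x}^\ell\nabla\partial^\alpha p}_{L^2}$ to the quadratic source term via the weighted Calder\'on--Zygmund inequality \eqref{3.10} applied to $\partial^{\alpha^\prime}p$ with $\abs{\alpha-\alpha^\prime}=1$, expand by Leibniz, and rerun the seven-range case analysis of Lemma \ref{lem4.2} with the order shift $j=\abs{\gamma}+1$. Your added observation that the constant in \eqref{3.10} is uniform in $\alpha^\prime$ (so it survives the Gevrey summation) is a point the paper leaves implicit, but it does not change the argument.
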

\begin{proof}
Using inequality \eqref{3.10}, the summation can first be bounded by
\begin{equation}\label{4.20+}
\begin{aligned}
 \mathcal{P}_\ell &=\sum_{m=3}^\infty\sum_{\abs{\alpha}=m}\norm{\comii{x}^\ell\partial^\alpha \nabla p}_{L^2}\frac{\tau^{m-3}}{(m-3)!^s}\\
 &=\sum_{m=3}^\infty\sum_{\abs{\alpha}=m}\norm{\comii{x}^\ell\partial^{\alpha-\alpha^\prime}\nabla \partial^{\alpha^\prime} p}_{L^2}\frac{\tau^{m-3}}{(m-3)!^s},\text{for some}\ \alpha^\prime\leq\alpha\ \text{with}\ \abs{\alpha-\alpha^\prime}=1 \\
 &\leq 3C\sum_{m=3}^\infty \sum_{\abs{\gamma}=m-1}\sum_{i,j=1}^3\norm{\comii{x}^\ell \partial^{\gamma} (\partial_{x_j}u_i\partial_{x_i}u_j)}_{L^2}\frac{\tau^{m-3}}{(m-3)!^s},
\end{aligned}
\end{equation}
where we have used
$$
 -\Delta \partial^{\alpha^\prime}p=\sum_{i,j=1}^3 \partial^{\alpha^\prime}\big(\partial_{x_j}u_i\partial_{x_i}u_j\big).
$$
Thus
$$
 \norm{\comii{x}^\ell\partial^{\alpha-\alpha^\prime}\nabla \partial^{\alpha^\prime} p}_{L^2}\leq C\sum_{i,j=1}^3\norm{\comii{x}^\ell \partial^{\alpha^\prime} (\partial_{x_j}u_i\partial_{x_i}u_j)}_{L^2}
$$
We now want to estimate the right hand side of \eqref{4.20+}, at first we rewrite the right hand side summation in the following way (still denote by $\alpha$ in the summation),
$$
 \mathcal{P}_\ell \leq C\sum_{m=3}^\infty \sum_{\abs{\alpha}=m-1}\sum_{\beta\leq\alpha}\sum_{i,j=1}^3 {\alpha\choose\beta}\norm{\comii{x}^\ell (\partial^\beta\partial_{x_j}u_i)(\partial^{\alpha-\beta}\partial_{x_i}u_j)}_{L^2}\frac{\tau^{m-3}}{(m-3)!^s}.
$$
If denote
$$
 \mathcal{P}_{\ell,m,k}=\frac{\tau^{m-3}}{(m-3)!^s}\sum_{\abs{\alpha}=m-1}\sum_{\abs{\beta}=k,\beta\leq\alpha}{\alpha\choose\beta}
 \sum_{i,j=1}^3\norm{\comii{x}^\ell (\partial^\beta \partial_{x_j}u_i)(\partial^{\alpha-\beta}\partial_{x_i}u_j)}_{L^2}.
$$
Then the right hand side can be written as
\begin{equation}\label{4.20}
\begin{split}
 \mathcal{P}_\ell &\leq C\sum_{m=3}^\infty \sum_{k=0}^{m-1} \mathcal{P}_{\ell,m,k}\\
  &\leq C\sum_{m=3}^\infty \mathcal{P}_{\ell,m,0}+C\sum_{m=3}^\infty \mathcal{P}_{\ell,m,1}+C\sum_{m=5}^\infty \mathcal{P}_{\ell,m,2}+C\sum_{m=8}^\infty\sum_{j=3}^{[m/2]-1}\mathcal{P}_{\ell,m,k}\\
  &\quad+C\sum_{m=6}^\infty\sum_{j=[m/2]}^{m-3}\mathcal{P}_{\ell,m,k}+C\sum_{m=4}^\infty \mathcal{P}_{\ell,m,m-2}+C\sum_{m=3}^\infty \mathcal{P}_{\ell,m,m-1},
\end{split}
\end{equation}
It rests to estimate the right hand side of \eqref{4.20}. Since they are quite similar with the previous Lemma \ref{lem4.3}, we list the results here for simplification.
\begin{equation}\label{4.21}
\begin{aligned}
 \sum_{m=3}^\infty \mathcal{P}_{\ell,m,0} &\leq C\norm{u}_{H^r_\ell}^2+C\tau\norm{u}_{H^r}\norm{u}_{Y_{\tau,\ell}},\\
 \sum_{m=3}^\infty \mathcal{P}_{\ell,m,1} &\leq C(1+\tau)\norm{u}_{H^r_\ell}^2+C\tau^2\norm{u}_{H^r}\norm{u}_{Y_{\tau,\ell}},\\
 \sum_{m=5}^\infty \mathcal{P}_{\ell,m,2} &\leq C\tau^2\norm{u}_{H^r_\ell}^2+C\tau^3\norm{u}_{H^r}\norm{u}_{Y_{\tau,\ell}},\\
 \sum_{m=8}^\infty \sum_{k=3}^{[m/2]-1}\mathcal{P}_{\ell,m,k} &\leq C\tau^{3/2}\norm{u}_{X_\tau}\norm{u}_{Y_{\tau,\ell}},\\
 \sum_{m=6}^\infty \sum_{j=[m/2]}^{m-3}\mathcal{P}_{\ell,m,j} &\leq C\tau^{3/2}\norm{u}_{X_\tau}\norm{u}_{Y_{\tau,\ell}},\\
 \sum_{m=4}^\infty \mathcal{P}_{\ell,m,m-2} &\leq C\tau\norm{u}_{H^r_\ell}^2+C\tau^2\norm{u}_{H^r}\norm{u}_{Y_{\tau,\ell}},\\
 \sum_{m=3}^\infty \mathcal{P}_{\ell,m,m-1} &\leq C\norm{u}_{H^r_\ell}^2+C\tau\norm{u}_{H^r}\norm{u}_{Y_{\tau,\ell}}.
\end{aligned}
\end{equation}
Substituting the right hand side estimates of \eqref{4.21}, we then conclude the proof of Lemma \ref{lem4.3}.
\end{proof}
For $r\geq5$ fixed, we use the Sobolev embedding theorem, and Lemma \ref{lem4.2} and Lemma \ref{lem4.3} to infer from \eqref{4.4},
\begin{equation}\label{4.23}
\begin{aligned}
 &\frac{d}{dt}\norm{u(t)}_{X_{\tau(t),\ell}} \leq C\norm{u(t)}_{H^r_\ell}^2\big(1+\tau(t)^2\big)+\dot{\tau}(t)\norm{u(t)}_{Y_{\tau(t),\ell}}\\
 &+C\norm{u(t)}_{Y_{\tau,\ell}} \bigg(\tau(t)\norm{u(t)}_{H^r}+(\tau(t)^2+\tau(t)^3)\norm{u(t)}_{H^r}+\tau(t)^{3/2}\norm{u(t)}_{X_{\tau(t)}}\bigg).
\end{aligned}
\end{equation}
If $\tau(t)$ decreases fast enough so that for all $0\leq t<T^*$ we have,
\begin{equation}\label{4.24}
 \dot\tau(t)+C\tau(t)\norm{u(t)}_{H^r}+C\big(\tau(t)^2+\tau(t)^3\big)\norm{u(t)}_{H^r}+C\tau(t)^{3/2}\norm{u(t)}_{X_{\tau(t)}}
 \leq0.
\end{equation}
Then \eqref{4.23} and the fact $\tau(t)\leq \tau(0)$ imply that
$$
 \frac{d}{dt}\norm{u(t)}_{X_{\tau(t),\ell}}\leq C\norm{u(t)}_{H^r_\ell}^2(1+\tau(0)^2).
$$
Integrating from $0$ to $t$, we have from \eqref{3.24}
\begin{equation}\label{4.25}
 \norm{u(t)}_{X_{\tau(t),\ell}} \leq \norm{u_0}_{X_{\tau(0),\ell}}+C_{\tau(0)}\int_0^t \norm{u(s)}_{H^r_\ell}^2 ds
\end{equation}
for all $0\leq t<T^*$, where $C_{\tau(0)}=1+\tau(0)^2$. We denote by $H(t)$ the right hand side of \eqref{4.25}, and it follows from the inequality \eqref{3.24},
\begin{equation*}
\begin{aligned}
 H(t) &:=\norm{u_0}_{X_{\tau(0),\ell}} +C_{\tau(0)} \int_0^t \norm{u(s)}_{H^r_\ell}^2 ds \\
 &\leq \norm{u_0}_{X_{\tau(0),\ell}} +C_{\tau(0)} t\norm{u_0}_{H^r_\ell}^2  \exp{\bigg(C\int_0^t \norm{u(s)}_{H^r}ds\bigg)}.
\end{aligned}
\end{equation*}
Since $\tau$ must be chosen to be a decreasing function, a sufficient condition for \eqref{4.24} to hold is that
\begin{equation}\label{4.27}
 \dot\tau(t)+2C\tau(t)\norm{u(t)}_{H^r}+2C\tau(t)^{3/2}\big(C_{\tau(0)}^\prime\norm{u(t)}_{H^r_\ell}+H(t)\big)=0
\end{equation}
where $C_{\tau(0)}^\prime=\tau(0)^{1/2}+\tau(0)^{3/2}$.
It then follows that if we solve the ODE \eqref{4.27} for $\tau(t)$,
\begin{equation}\label{4.28}
\begin{aligned}
 &\frac{1}{\tau(t)^{1/2}} =\exp{\bigg(C\int_0^t \norm{u(s)}_{H^r}ds\bigg)} \\ &\times\bigg[\tau(0)^{-1/2}+C\int_0^t\big(C_{\tau(0)}^\prime\norm{u(s)}_{H^r_\ell}+H(s)\big)\exp{\bigg(-C\int_0^s \norm{u(\lambda)}_{H^r}d\lambda\bigg)}ds\bigg]
\end{aligned}
\end{equation}
We note from \eqref{3.24} if the constant $C$ is large enough such that
$$
 \norm{u(t)}_{H^r_\ell}^2 \leq \norm{u_0}_{H^r_\ell}^2 \exp{\bigg(C\int_0^t \norm{u(s)}_{H^r}ds\bigg)}
$$
Then we have
\begin{equation*}
\begin{aligned}
 \tau(0)^{-1/2}+ &C\int_0^t\bigg(C_{\tau(0)}^\prime\norm{u(s)}_{H^r_\ell}+M(s)\bigg)G(s)^{-1}ds\\
   &\leq \tau(0)^{-1/2}+C\int_0^t\bigg(C_{\tau(0)}^\prime\norm{u_0}_{H^r_\ell}+\norm{u_0}_{X_{\tau(0),\ell}}+
   sC_{\tau(0)}\norm{u_0}_{H^r_\ell}^2\bigg)ds\\
   &\leq C_0(1+t)^2,
\end{aligned}
\end{equation*}
and therefore \eqref{4.28} implies
\begin{equation*}
 \frac{1}{\tau(t)^{1/2}}\leq C_0(1+t)^2\exp{\bigg(C\int_0^t \norm{u(s)}_{H^r}ds\bigg)}
\end{equation*}
We recall (see \cite{MB}) the $H^r$-norm of $u$ has an upper bound like
\begin{equation*}
 \norm{u(t)}_{H^r}\leq \frac{\norm{u_0}_{H^r}}{1-C_r\norm{u_0}_{H^r}t},\quad 0<t<T^\ast,
\end{equation*}
where $C_r$ depends on $r$ and we can enlarge it to be $C$. Then
\begin{equation*}
 \frac{1}{\tau(t)^{1/2}}\leq \frac{C_0(1+t)^2}{1-C\norm{u_0}_{H^r}t}
\end{equation*}
And thus we obtain the lower bound for $\tau$,
\begin{equation*}
 \tau(t)\geq \frac{(1-C\norm{u_0}_{H^r}t)^2}{C_0(1+t)^4}
\end{equation*}
In such case choice of $\tau$, we also have from \eqref{4.23},
\begin{equation}\label{4.30}
\begin{aligned}
 &\frac{d}{dt}\norm{u(t)}_{X_{\tau(t),\ell}}
 +C\norm{u(t)}_{Y_{\tau,\ell}} \bigg[\tau(t)\norm{u(t)}_{H^r}+C\tau(t)^{3/2}\big(C_{\tau(0)}^\prime\norm{u(t)}_{H^r_\ell}+H(t)\big)\bigg] \\
 &\leq C\norm{u(t)}_{H^r_\ell}^2\big(1+\tau(t)^2\big).\\
\end{aligned}
\end{equation}
Since $\tau(t)$ has a lower bound for sufficient small $0<T<T^\ast$, we then obtain by integrating \eqref{4.30} from $0$ to $T$,
$$
 \int_0^T \norm{u(s)}_{Y_{\tau(s),\ell}}<\infty.
$$
Thus we have $u(t,x)\in L^\infty([0,T),X_{\tau(\cdot),\ell})\cap L^1([0,T),Y_{\tau(\cdot),\ell})$.
This concludes the a priori estimates that are used to prove Theorem \ref{Th2.1}. The proof can be made formal by considering an approximating solution $u^{(n)}, n\in\mathbb{N}$, proving the above estimates for $u^{(n)}$, and then taking the limit as $n\to\infty$. We thus omit the details and refer the readers to \cite{KT} for further discussions.
\end{proof}

\bigskip
\noindent{\bf Acknowledgements.} The research of the second author was supported by NSF of China(11422106) and Fok Ying Tung Education Foundation (151001), the research of the first author and the last author is supported partially by
``The Fundamental Research Funds for Central Universities of China".


\begin{thebibliography}{99}
\bibitem{BB}
\newblock Bardos, C; Benachour, S.
\newblock Domaine d'analycit\'e des solutions de l'\'equation d'Euler dans un ouvert de $\mathbb{R}^n$,
\newblock {\em Ann. Scuola Norm. Sup. Pisa CI. Sci.}, (4)4(1997), 647-687.

\bibitem{B}
\newblock Bardos, C.
\newblock Analyticit\'e de la solution de l'\'equation d'Euler dans un ouvert de $\mathbb{R}^n$.
\newblock {\em C. R. Acad. Sci. Paris S\'er}. A-B 283(1976), A255-A258.

\bibitem{BCD}
\newblock Bahouri, H; Chemin, J.Y; Danchin, R.
\newblock Fourier Analysis and Nonlinear Partial Differential Equations.
\newblock Springer Science \& Business Media, 2011.

\bibitem{BK}
\newblock Beal, J.T; Kato, T; Majda, A.
\newblock Remarks on the breakdown of smooth solutions for the 3-D Euler equations.
\newblock {\em Communications in Mathematical Physics}, 1984, 94(1): 61-65.

\bibitem{BS}
\newblock Benachour, S.
\newblock Analyticit\'e des solutions p\'eriodiques de l'\'equation d'Euler en trois dimensions.
\newblock {\em C. R. Acad. Sci. Paris S\'er}, A-B 283(1976), A107-A110.

\bibitem{BB1}
\newblock Bourguignon, J.P; Brezis, H.
\newblock Remarks on the Euler equation.
\newblock {\em J. Functional Analysis}, 15(1974), 341-363.

\bibitem{EM}
\newblock Ebin, D.G; Marsden, J.E.
\newblock Groups of diffeomorphisms and the solutions of the classical Euler equations for a perfect fluid.
\newblock {\em Bull. Amer. Math. Soc}, 75(1969), 962-967.

\bibitem{CLX}
\newblock Feng, C; Wei-xi, Li; Chao-jiang, Xu.
\newblock Gevrey regularity with weight for Incompressible Euler equation in half plane.
\newblock arxiv: 1511.00539.

\bibitem{FFT}
\newblock Foias, C; Frisch, U; Temam, R.
\newblock Existence de solutions $C^\infty$ des \'equations d'Euler.
\newblock {\em C. R. Acad. Sci. Paris S\'er}. A-B 280(1975), A505-A508.

\bibitem{FT}
\newblock Foias, C; Temam, R.
\newblock Gevrey class regularity for the solutions of the Navier-Stokes equations.
\newblock {\em J. Funct. Anal}, 87(1989), 359-369.

\bibitem{K}
\newblock Kato, T.
\newblock Nonstationary flows of viscous and ideal fluids in $\mathbb{R}^3$.
\newblock {\em J. Funct. Anal}, 9 (1972), 296-305.

\bibitem{KT}
\newblock Kukavica, I; Temam, R; Vicol, V C, et al.
\newblock Local existence and uniqueness for the hydrostatic Euler equations on a bounded domain.
\newblock {J. Differential Equation}, 250(2011), no. 3, 1719-1746.

\bibitem{KV}
\newblock Kukavica, I; Vicol, V.
\newblock The domain of analyticity of solutions to the three-dimensional Euler equations in a half space.
\newblock {\em Discrete and Continuous Dynamic Systems.}, Vol 29, 1(2011), 285-303.

\bibitem{KV1}
\newblock Kukavica, I; Vicol, V.
\newblock On the radius of analyticity of solutions to the three-dimensional Euler equations.
\newblock {\em Proc. Amer. Math. Soc}. 137(2009), 669-677.

\bibitem{LO}
\newblock Levermore, C.D; Oliver, M.
\newblock Analyticity of solutions for a generalized Euler equation.
\newblock {\em J. Differential Equations}, 133(1997), 321-339.

\bibitem{LM}
\newblock Lions, J-L; Magenes, E.
\newblock Problem\'es aux limites non homog\'enes et applications.
\newblock Vol. 3. Dunod, Paris, 1970.

\bibitem{MB}
\newblock Majda, A.J; Bertozzi, A.L.
\newblock Vorticity and incompressible flow.
\newblock Cambridge University Press, 2002.

\bibitem{M}
\newblock McGrath, F.J.
\newblock Nonstationary plane flow of viscous and ideal fluids.
\newblock {\em Arch. Rational Mech. Anal}. (27) 1968, 329-348.

\bibitem{OM}
\newblock Oliver, M.
\newblock Classical solutions for a generalized Euler equations in two dimensions.
\newblock {\em{Journal of Mathematical Analysis and Applications}}, 1997, 215(2): 471-484.

\bibitem{S}
\newblock Schaeffer, A.C.
\newblock Existence theorem for the flow of an incompressible fluid in two dimensions.
\newblock {\em Trans. Amer. Math. Soc}. (42) 1937, 497-513.

\bibitem{SEM}
\newblock Stein, E.M.
\newblock Note on Singular integrals.
\newblock {\em Proc. Amer. Math. Soc.}, 8(1957), 250-254.

\bibitem{S1}
\newblock Swann, H.
\newblock The convergence with vanishing viscosity of nonstationary Navier Stokes flow to the ideal flow in $\mathbb{R}^3$.
\newblock {\em Trans. Amer. Math. Soc}, 157 (1971), 373-397.

\bibitem{T}
\newblock Temam, R.
\newblock On the Euler equations of incompressible perfect fluids.
\newblock {\em J. Functional Analysis}, 20(1975), 32-43.

\bibitem{YZ}
\newblock Yuan, C; Zhen, L.
\newblock Global well-posedness of the Incompressible Magnetohydrodynamics.
\newblock arxiv: 1605.00439v1, 2016.

\bibitem{Y}
\newblock Yudovich, V.I.
\newblock Non stationary flow of an ideal incompressible liquid.
\newblock {\em Zh. Vych. Mat}, 3(1963), 1032-1066.


\end{thebibliography}
\end{document}